\documentclass[pdflatex,sn-mathphys-num]{sn-jnl}

\usepackage{graphicx}%
\usepackage{multirow}%
\usepackage{amsmath,amssymb,amsfonts}%
\usepackage{amsthm}%
\usepackage{mathrsfs}%
\usepackage[title]{appendix}%
\usepackage{xcolor}%
\usepackage{xfrac}%
\usepackage{textcomp}%
\usepackage{manyfoot}%
\usepackage{booktabs}%
\usepackage{algorithm}%
\usepackage{algorithmicx}%
\usepackage{algpseudocode}
\usepackage{subfigure}
\usepackage{algpseudocode}%
\usepackage{listings}%
\definecolor{mygreen}{rgb}{0,0.6,0}
\definecolor{mygray}{rgb}{0.5,0.5,0.5}
\definecolor{mymauve}{rgb}{0.58,0,0.82}
\definecolor{altblue}{rgb}{0.0,0.6,1.0}
\definecolor{lstbg}{cmyk}{0.05, 0.01, 0, 0}
\definecolor{morebluish}{cmyk}{0.06,0.04,0,0}
\input{listings-maple-definition.sty}
\lstset{
backgroundcolor=\color{lstbg},
basicstyle=\small\ttfamily,language=maple
}
\newcommand{\e}{{\varepsilon}}
\newcommand{\snot}[2]{{{#1}\times 10^{#2}}} 


\newcommand{\yWKB}{{y_{\mathrm{WKB}}}}
\newcommand{\yref}{{y_{\mathrm{ref}}}}



\theoremstyle{thmstyleone}%
\newtheorem{theorem}{Theorem}
%

\theoremstyle{thmstyletwo}%
\newtheorem{example}{Example}%
\newtheorem{remark}{Remark}%

\theoremstyle{thmstylethree}%

\raggedbottom


\begin{document}

\title[Structured backward error for WKB]{Structured backward error for the WKB method}


\author*[1]{\fnm{Robert M.} \sur{Corless}}\email{rcorless@uwo.ca}

\author[2]{\fnm{Nicolas} \sur{Fillion}}\email{nfillion@sfu.ca}
\equalcont{These authors contributed equally to this work.}

\affil*[1]{\orgdiv{The Department of Computer Science, and the Rotman Institute of Philosophy}, \orgname{Western University}, \orgaddress{\street{1151 Richmond St}, \city{London}, \postcode{N6A 3K7}, \state{Ontario}, \country{Canada}}}

\affil[2]{\orgdiv{Department of Philosophy}, \orgname{Simon Fraser University}, \orgaddress{\street{8888 University Drive}, \city{Burnaby}, \postcode{V5A 1S6}, \state{British Columbia}, \country{Canada}}}


\abstract{The classical WKB method (also known as the WKBJ method, the LG method, or the phase integral method) for solving singularly perturbed linear differential equations has never, as far as we know, been looked at from the structured backward error (BEA) point of view.  This is somewhat surprising, because a simple computation shows that for some important problems, the WKB method gives the exact solution of a problem \textsl{of the same structure} that can be expressed in finitely many terms. This kind of analysis can be extremely useful in assessing the validity of a solution provided by the WKB method.  In this paper we show how to do this and explore some of the consequences, which include a new iterative algorithm to improve the quality of the WKB solution.  We also explore a new hybrid method where the potential is approximated by Chebyshev polynomials, which can be implemented in a few lines of Chebfun.}

\keywords{WKB method, LG method, singular perturbation, structured backward error, conditioning, hybrid Chebyshev--WKB method}


\maketitle


\section{Introduction\label{sec:intro}}
The classical WKB method\footnote{The name WKB comes from the initials of three of the researchers who worked on the problem in the early part of the twentieth century, Wentzel, Kramers, and Brillouin.  Other names for this method are the LG method (for Liouville and Green; Smith uses this in~\cite{Smith1985}) and yet another name is the WKBJ method (where the J is added for Jeffreys). Dingle makes the case in~\cite{Dingle1972} that the name really should be the ``phase integral method.'' Short of following this recommendation, `the WKB method' is at least historically coherent.} is intended to approximately solve linear differential equations in which a small parameter is multiplying the highest derivative, such as the following class, often called Schr\"odinger-type equations:
\begin{equation}\label{eq:WKBSchroedinger}
  \e^2 y'' = Q(x) y\>.
\end{equation}
Equations of this type frequently come up in classical physics (as instances of Newton's second law) and in quantum physics (as instances of the Schr\"odinger equation), among many others. Due to the prevalence of equations of this form in physics, the expression $Q(x)y$ is often called a `potential,' although we will also take the liberty of sometimes referring to $Q(x)$ itself as a potential. Since the WKB method produces a simple formula for the approximate solution of equation~\eqref{eq:WKBSchroedinger} that is in many cases quite accurate, the technique has become quite popular. 
Furthermore, when $\e$ is small, numerical methods can be slow to solve the problem accurately, so that it can be beneficial to have a complementary perturbative technique.

Following its classical description, the WKB method posits a solution of the form
\begin{equation}\label{eq:wqbPosit}
    y_{\sc\rm WKB} = \exp\left( \frac{1}{\delta} S(x) \right) \>,
\end{equation}
where $S(x)$ has the possibly infinite\footnote{We think that even writing this potentially infinite series down is a bit of a misstep, and we hope this paper shows why.} series representation
\begin{equation}\label{eqSx}
    S(x) = S_0(x) + \delta S_1(x) + \delta^2S_2(x) +\cdots \>.
\end{equation}
With an appropriate choice of $\delta$, solving for the $S_k$s is expected to produce a good approximation to $y$, provided that $Q(x)$ is a ``slowly varying function.'' In his historical survey of the method, \citet{schlissel1977initial} remarks that ``[t]he various researchers [including the likes of Green, Rayleigh, Gans, Jeffreys, Wentzel, Kramers, and Brillouin] attempted to justify their procedures, but these were at best token gestures,'' since ``[t]he contributors were in many instances physicists searching to obtain approximate solutions whose nature they had predetermined'' (p.~184). Further historical remarks about the WKB method are given in~\cite{CorlessFillion2025}.  Nowadays, there exists rigorous mathematical treatments of the method (see, e.g.~\cite{Smith1976} or for a more undergraduate level presentation~\cite{Murdock1999}); this paper seeks to supplement those rigorous treatments by using a different perspective on error analysis, namely, backward error analysis (BEA), that in many respects echoes the pioneering physicists' way of thinking. 

\textsl{Both} of the just-cited references come remarkably close to the presentation in this paper, but \textsl{just} miss: neither one gives a structured backward error interpretation, even though~\cite{Smith1976} uses residuals and~\cite{Murdock1999} even gives formulas equivalent to the ones we derive here.

Indeed, to our knowledge, this paper and the book~\cite{CorlessFillion2025} are the first to examine the WKB method from the point of view of structured BEA. Somewhat surprisingly to us, the WKB method has, for Schr\"odinger-type equations, a very simple interpretation in terms of {\sl structured} backward error, which does not seem to have been noticed before\footnote{Gans at least computed a residual, in his 1915 paper~\cite{gans1915fortpflanzung}. However, he did not compute a \textsl{relative} residual and thus did not notice the structured backward error interpretation.  In any case, such interpretations did not come into the literature until the 1950s, at least.}. In our view, this particular structured backward error analysis deserves much wider attention, because this interpretation really explains why the method works so well when it does, and demonstrates convincingly what goes wrong when it doesn't. 

The interpretation also shows why the standard analysis of the conditioning of the problem, using the approximate Green's function obtained using the WKB method, gives an excellent unstructured condition number for the problem.  In addition, one can deduce a \textsl{structured} condition number by direct examination of the symbolic answer. Such backward error plus conditioning analyses are common in numerical linear algebra, but less common in numerical solution of differential equations, although not unknown there~\citep[see, e.g.,][]{CorlessFillion(2013)}. 

We do not know when the idea of using BEA (plus conditioning) to interpret perturbation methods was first proposed, but one can see it in~\cite{Smith1976} at least, and in~\cite{Roberts(2014)}, as well as in \cite{corless2019backward}. We also feel that the backward error idea, though ``well-known'' in the numerical analysis community, itself deserves wider attention.
 
\subsection{Organization of the paper}
This paper extends the treatment of this idea present in~\cite{CorlessFillion2025}.  We give some background from that book in section \ref{subsec:bg} in order to make this paper self-contained.  
Subsections \ref{sec:optics},  \ref{sec:green}, and \ref{sec:IterativeWKB}  of section \ref{subsec:bg} summarize material from our book, using different examples that we believe are more appropriate for this audience.
We give a structured backward error interpretation of the method's solution to a simple but important class of problems. 
We mention an unstructured conditioning analysis by means of Green's functions. 
We also show how the standard WKB formula can be usefully recast as an iterative algorithm.  

In the main part of the paper, namely section~\ref{sec:hybrid}, we discuss a new hybrid WKB algorithm which uses Chebyshev approximation of the square root $\sqrt{Q(x)}$ as one technique to eliminate a bottleneck in the WKB method.  We show how this works in both Maple and in Chebfun~\cite{battles2004extension} in MATLAB.


\section{Background from our book}\label{subsec:bg}
The first thing is that our book~\cite{CorlessFillion2025} treats perturbation methods as an \textsl{iteration}. This approach has several advantages, the first of which being that we never posit an infinite (possibly divergent) series as is the case with the classical account of the method based on equations~\eqref{eq:wqbPosit} and~\eqref{eqSx}. Instead, we simply consider functions of the finite form
\begin{equation}\label{eq:finiteansatz}
    y_n(x) = \textrm{exp}\left(\frac{1}{\delta}\sum_{k=0}^n S_k(x)\delta^k \right)
\end{equation}
together with their residuals in the differential equation. With respect to equation~\eqref{eq:WKBSchroedinger}, the residual is given by 
\begin{equation}\label{residSchrodinger}
    r(y_n) = \e^2y_n''-Q(x)y_n\>,
\end{equation}
and the relative residual is $r(y_n)/y_n = \e^2y_n''/y_n - Q(x)$. 

More generally, for a differential equation of the form $F(x,y;\e)=0$, the residual of $y_n$ is given by $F(x,y_n;\e)$ (we usually simply write $F(y_n)$ for simplicity). In this way, we only work with finite constructions and we know immediately if the approximations do not improve, because we can see if the residuals $F(y_n)$ decrease or not.  If the size of $F(y_{n+1})$ is not smaller than the size of $F(y_n)$, the iteration has \textsl{stagnated}.  Working in series, this typically only happens with the initial approximation $y_0$, and then only if the initial approximation is not good enough.  

This general abstract perturbation method starts with an initial approximation $y_0$, and then (essentially) uses functional iteration based on Newton's method:
\begin{equation}
    y_{n+1} = y_n - \frac{F(y_n)}{F'(y_0)}\>. 
\end{equation}
Here the derivative is a Fr\'echet derivative and the inversion in the notional equation above represents solving the linearized problem---linearized about the initial approximation---to get one more term in the expansion. Because the linearization only happens once, this is functional iteration, and converges linearly at best if done numerically, but each iteration gives one more correct term in a perturbation expansion if done in series.

A second advantage of viewing perturbation as an iteration is that the residual $F(y_n)$ is computed every time, frequently in finite terms, and we have at each step the \textsl{exact} solution to the equation $F(y)-F(y_n) = 0$. This is both trivial and profound, as it tells us one of the modified equations that $y_n$ solves exactly (or, in the terminology of \cite{CorlessFillion(2013)}, what reversed-engineered problem $y_n$ solves). The residual $F(y_n)$ is a kind of backward error (other kinds might be computed, as we will see). 

Putting the emphasis on the question ``what equation does $y_n$ solve exactly?'' is an alternative to the more traditional focus on the question ``does $F(y_n)\approx 0$'' that we contend leads to more rigorous error analysis and more readily interpretable results.

This change of focus leads to a third advantage of this approach, namely, that it is then natural to compute a final residual and interpret it in terms of the original model.  Once that is done, we may use the standard theory of conditioning to understand if our solution is a good one or not. 

Interestingly, \cite{schlissel1977initial} describes how, in 1915, Gans used the WKB method (11 years prior to the papers of Wentzel, Kramers, and Brillouin) to study light propagation in inhomogenous media governed by equation~\eqref{eq:WKBSchroedinger}, and computed the residual to argue that the $y_n$s given by the WKB method ``almost satisfy'' equation~\eqref{eq:WKBSchroedinger}. Schlissel criticizes this justification, but one only needs to further consider the  conditioning, as we do, to complete Gans' argument. Indeed, our paper shows that Gans' strategy to assess the error of the $y_n$s would have been adequate had he had the framework of backward error analysis at hand to interpret the residual. Of course, it is only with Wilkinson's work in the 1960s that the framework became widely understood~\citep[see, e.g.,][]{wilkinson1971modern}, so that wasn't an option for Gans.

\subsection{The approximation from physical optics} \label{sec:optics}
The following gives the basic WKB formula, which is derived in several places, including~\cite{Bender(1978)}.  The derivation starts with the \textsl{finite} sum $S_0(x)/\e + S_1(x)$. This formula is sometimes known as the \textsl{approximation from physical optics}:
\begin{equation}\label{eq:WKBform}
  \yWKB(x) = c_1 Q(x)^{-1/4} e^{S_0(x)/\e} + c_2 Q(x)^{-1/4} e^{-S_0(x)/\e}
\end{equation}
where
\begin{equation}\label{eq:QWKB}
  S_0(x) = \int_{0}^{x} \sqrt{Q(\xi)}\,d\xi\>.
\end{equation}
The lower limit is unimportant, although it can be chosen to make some computations more convenient.
The function $\yWKB$ is supposed to be a reasonable approximation, for small $\e$, of the solution to equation~\eqref{eq:WKBSchroedinger}. 
In section \ref{sec:IterativeWKB}, we will use this as an initial approximation in an apparently novel iterative perturbation scheme. Within that scheme, as we will see, this reasonably simple approach yields quite accurate initial approximations\index{initial approximation}. But first, we examine a suggestive example.

\begin{example}
Let's take $\e^2y'' + (1+x^8)y = 0$ as an instance of equation~\eqref{eq:WKBSchroedinger}, so that $Q(x) = -(1+x^8)$. 

Applying equation~\eqref{eq:QWKB}, we first compute
\begin{equation}\label{eq:WKBexampleS0}
  S_0 = \int_{0}^{x} \sqrt{1+\xi^8}\,d\xi = x F\left(\left. { -\sfrac12, \sfrac18} \atop {1+\sfrac18}\right| -x^8 \right)
\end{equation}
where $F$ is a hypergeometric function. In Maple (similarly in MATLAB) that expression is \lstinline{x*hypergeom([-1/2,1/8],[9/8],-x^8)}.
Following equation~\eqref{eq:WKBform}, our WKB approximation is 
\begin{equation}\label{eq:WKBexampley}
\yWKB(x) = \frac{c_{1} \cos \! \left(\sfrac{S_{0}}{\varepsilon}\right)}{\left(x^{8}+1\right)^{\frac{1}{4}}}+\frac{c_{2} \sin \! \left(\sfrac{S_{0}}{\varepsilon}\right)}{\left(x^{8}+1\right)^{\frac{1}{4}}}
\end{equation}
where $S_0$ is as above.
Here $c_1$ and $c_2$ are arbitrary constants. We can identify the constants by fitting boundary conditions, for nonexceptional values\footnote{The exceptional values are eigenvalues of the problem, which we ignore in this paper.} of $\e$.  If, for instance, $y(-1)=1$ and $y(1)=2$, then we get certain numerical values that depend on $\e$ for each of $c_1$ and $c_2$.

But even before we apply the boundary conditions, we compute the residual\index{residual!WKB example} given by equation~\eqref{residSchrodinger}, and---this is important---notice that $\yWKB$ is a factor of the residual:
\begin{equation}\label{eq:residualWKB}
  r(x) = \e^2\yWKB''(x) - (1+x^8)\yWKB(x) = \e^2 
\frac{2 x^{6} \left(3 x^{8}-7\right)}{\left(x^{8}+1\right)^{2}}\yWKB\>.
\end{equation}
Of course we did that in Maple, although the simplification needed some human help. 

For the computed residual to be enlightening, we interpret it as follows: we have found the \textsl{exact} solution to the Schr\"odinger-like equation $\e^2 y'' + \widetilde{Q}(x) y = 0$ where
\begin{equation}\label{eq:structuredbackwarderrorWKB}
  \widetilde{Q}(x) = 1+x^8 + \e^2 Q_2(x) = 1 + x^8 + \e^2 \frac{2 x^{6} \left(3 x^{8}-7\right)}{\left(x^{8}+1\right)^{2}}\>.
\end{equation}
We plot $Q_2(x)$ in figure~\ref{fig:PerturbedPotential} and $y(x)$ for $\e=1/13$ in figure~\ref{fig:hypergeometricG13}(a).

Let us emphasize the point of interpreting the residual as we've suggested: The WKB method has found the \textsl{exact} solution of
a problem of the same type as the one given by equation~\eqref{eq:WKBSchroedinger}, with a potential $\widetilde{Q}(x)$ different by an order of $\e^2$.  
\begin{figure}
  \centering
  \includegraphics[width=0.7\textwidth]{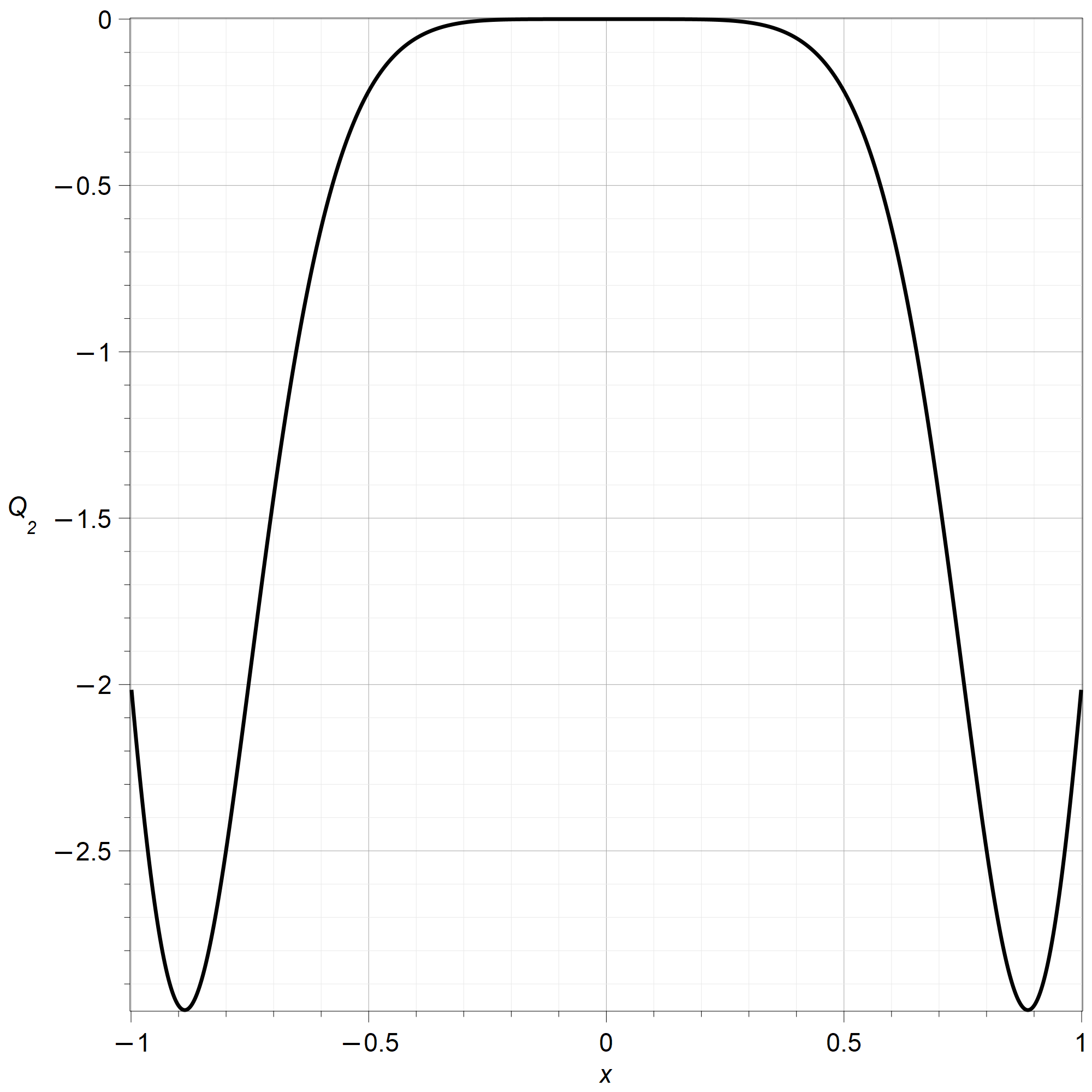}
  \caption[Perturbed potential from WKB]{The perturbation $Q_2(x) = \frac{2 x^{6} \left(3 x^{8}-7\right)}{\left(x^{8}+1\right)^{2}}$ to the potential $Q(x)=1+x^8$ from equation~\eqref{eq:structuredbackwarderrorWKB}. We see that on the interval of interest, $\e^2 Q_2(x)$ will be less than $3\e^2$ in magnitude. }\label{fig:PerturbedPotential}
\end{figure}
This perturbed potential\index{potential} can be interpreted in the physical terms of the original model. We also see that this particular perturbation is uniformly bounded in magnitude for all  $x$ in $[-1,1]$ by $3\e^2$.

Note that when $-1 < x < 1$, $1+x^8$ is quite close to $1$ except near the endpoints.  Thus the potential is nearly constant, and so we would expect nearly pure harmonic oscillation.  That is what we observe in figure~\ref{fig:hypergeometricG13}(a).  Outside that interval, the potential is quite different, and the solution is also quite different.  These observations help to internalize the idea of looking at the backward error.
\end{example}

The behaviour displayed in this example and the style of analysis we used is quite general, as far as equations of the form of \eqref{eq:WKBSchroedinger} are concerned, as the next theorem shows.

\begin{theorem}[The Backward WKB Theorem] \label{thm:BackwardWKB} \index{WKB Backward Theorem} 
If $Q(x)$ is twice continuously differentiable on $a < x < b$ (either $a$ or $b$ or both may be infinite), and $Q(x) \ne 0$ on $a < x < b$, then the approximation from physical optics in equations~\eqref{eq:WKBform}--\eqref{eq:QWKB} is the \textsl{exact} general solution to
\begin{equation}\label{eq:WKBbackward}
  \e^2 y'' = \widetilde{Q}(x) y\>,
\end{equation}
where
\begin{equation}\label{eq:TildeQgeneral}
\widetilde{Q}(x) = Q(x) + \e^2 Q_2(x) = Q(x) + \e^2 \left(  5\left(\frac{Q'(x)}{4Q(x)}\right)^2 - \frac{Q''(x)}{4Q(x)} \right) \>.
\end{equation}
\end{theorem}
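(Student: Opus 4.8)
The plan is to prove the theorem by direct substitution, using linearity to reduce everything to the two elementary building blocks of $\yWKB$. First I would write the two basis functions as $u_\pm(x) = A(x)\,e^{\pm S_0(x)/\e}$ with amplitude $A = Q^{-1/4}$ and phase satisfying $S_0' = \sqrt{Q}$; the hypotheses $Q \in C^2$ and $Q \neq 0$ on $(a,b)$ guarantee that $A$ and $S_0$ are well defined and twice differentiable there. Substituting $u = A e^{\phi}$ with $\phi = \pm S_0/\e$ into the operator $\e^2(\cdot)'' - Q(\cdot)$ gives
\[
  \e^2 u'' - Q u = \e^2\bigl(A'' + 2A'\phi' + A\phi'' + A(\phi')^2\bigr)e^{\phi} - Q A e^{\phi}\>.
\]
The engine of the whole argument is then a pair of cancellations on the right-hand side.

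Second, I would record the two identities that make the right-hand side collapse. Because $\phi' = \pm\sqrt{Q}/\e$ we have $\e^2(\phi')^2 = Q$, so the term $\e^2 A(\phi')^2 e^{\phi}$ exactly cancels $-QAe^{\phi}$; this is precisely the eikonal (leading-order) cancellation that defines $S_0$. Next, using $A' = -\tfrac14 Q^{-5/4}Q'$ and $\phi'' = \pm Q'/(2\sqrt{Q}\,\e)$, a short computation shows the two $O(\e)$ contributions cancel as well: $\e^2\cdot 2A'\phi' = \mp\tfrac{\e}{2}Q^{-3/4}Q'$ and $\e^2 A\phi'' = \pm\tfrac{\e}{2}Q^{-3/4}Q'$ sum to zero. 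This is the step where the transport-equation choice $A = Q^{-1/4}$ pays off, and I expect the sign bookkeeping together with the fractional powers to be the one place that needs care. After these cancellations only the $\e^2 A'' e^{\phi}$ term survives, so $\e^2 u_\pm'' - Q u_\pm = \e^2 (A''/A)\,u_\pm$.

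Third, I would identify $A''/A$ with the stated $Q_2$. Writing $A'/A = -\tfrac14 Q'/Q$ and using $A''/A = (A'/A)' + (A'/A)^2$ yields
\[
  \frac{A''}{A} = -\frac{Q''}{4Q} + \frac{5}{16}\left(\frac{Q'}{Q}\right)^2 = 5\left(\frac{Q'}{4Q}\right)^2 - \frac{Q''}{4Q} = Q_2(x)\>,
\]
which matches~\eqref{eq:TildeQgeneral}. The decisive observation is that this multiplier is \emph{independent of the sign} $\pm$, so both $u_+$ and $u_-$, and hence every linear combination $\yWKB = c_1 u_+ + c_2 u_-$, satisfy $\e^2 y'' = (Q + \e^2 Q_2)y = \widetilde{Q}\,y$ exactly.

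Finally, to upgrade ``a solution'' to ``the general solution'' I would verify linear independence through the Wronskian $W = u_+ u_-' - u_- u_+'$; the cross terms collapse to $W = -2A^2 S_0'/\e = -2/\e \neq 0$, a constant, consistent with Abel's theorem for an equation with no first-derivative term. Since $\e^2 y'' = \widetilde{Q}\,y$ is second order and linear and $Q \neq 0$ keeps $W$ from vanishing, $\{u_+, u_-\}$ is a fundamental system, and $\yWKB$ with arbitrary $c_1, c_2$ is the exact general solution, as claimed. The proof is thus a guided computation rather than a conceptual difficulty; the only genuine subtlety is the $O(\e)$ cancellation and the need to fix the branch of the (possibly complex, when $Q<0$) square and fourth roots consistently, so that the same $Q_2$ serves both signs.
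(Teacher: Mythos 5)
Your proposal is correct and takes essentially the same route as the paper: a direct computation showing that each basis solution $Q^{-1/4}e^{\pm S_0/\e}$ has relative residual $\e^2 Q_2$ with $Q_2$ independent of the sign of the exponent (your $A''/A$ is exactly the paper's $S_1''+(S_1')^2$, since $A=e^{S_1}$), so the same modified potential serves every linear combination. Your only addition is the Wronskian computation $W=-2/\e\neq 0$ certifying that the pair is a fundamental system — a point the paper leaves implicit in claiming the \textsl{general} solution — which is welcome extra rigor but not a different method.
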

\begin{proof}
Take the logarithm of the ansatz in equation~\eqref{eq:finiteansatz} and differentiate:
\begin{align}
    \ln y_n &= \frac{1}{\delta} \sum_{k=0}^n S_k(x)\delta^k \\
    \frac{y_n'}{y_n} &= \frac{1}{\delta} \sum_{k=0}^n S_k'(x)\delta^k 
\end{align}
Thus we have $y_n' = \mathcal{S}'(x) y_n$, where $\mathcal{S}'$ is the finite sum
\begin{equation}
    \mathcal{S}' = \frac{1}{\delta} \sum_{k=0}^n S_k'(x)\delta^k\>.
\end{equation}
Then $y_n'' = \mathcal{S}''y_n + \mathcal{S}'y_n' = \left(\mathcal{S}'' + \left(\mathcal{S}'\right)^2\right)y_n$, which has a finite number of terms multiplying $y_n$.

In the case of the second-order equation under consideration, $\e^2 y'' = Q(x)y$, the choice $\delta = \e$ is the right one, as found for example by a Newton polygon. That means that the leading term in $\mathcal{S}'$ is $S_0'/\e$, and the leading term in $\left(\mathcal{S}'' + \left(\mathcal{S}'\right)^2\right)$ is $\left(S_0'\right)^2/\e^2$. Therefore $S_0'(x) = \pm\sqrt{Q(x)}$ (either sign) as usual, and setting the next power of $\e$ to zero (to match the right hand side further) gives $S_0'' + 2S_1'S_0' = 0$, again as usual, so $S_1' = -S_0''/(2S_0') = -Q'/(4Q)$ and therefore $S_1 = |Q(x)|^{-1/4}$. We will normally wave the absolute value signs away, because if no turning point is encountered, the sign of $Q(x)$ can be dealt with by adjusting the constants $c_1$ and $c_2$, and we assume that the reader will look after that detail.

Because $n$ is finite, the remaining expression also contains only finitely many terms. In the simple case when $n=1$, we have $\mathcal{S} = S_0/\e + S_1$ and so
\begin{equation}
    \e^2 \mathcal{S}'' + \e^2\left(\mathcal{S}'\right)^2 = Q(x) + \e^2\left(S_1'' + \left(S_1')\right)^2\right)\>. 
\end{equation}
Substituting $S_1' = -Q'/(4Q)$ in that expression yields
\begin{equation}
Q(x) + \e^2 Q_2(x) =    Q(x) + \e^2\left( 5\left( \frac{Q'}{4Q}\right)^2 - \frac{Q''}{4Q} \right)\>.
\end{equation}
That is, the WKB solution $y = \exp(  S_0/\e + S_1)$ with $S_0' = \pm \sqrt{Q}$ and $S_1' = -Q'/(4Q)$ gives the \textsl{exact} solution to $\e^2 y'' = (Q(x) + \e^2Q_2(x)y$ where $Q_2$ is defined as above.

The key point is that the expression for $Q_2$ is the \textsl{same} whatever sign we take for $S_0$, so we have that the residual of $y_1 = Q^{-1/4} \exp( S_0/\e )$ is $\e^2 Q_2 y_1$ and the residual of $y_2 = Q^{-1/4} \exp(-S_0/\e )$ is $\e^2 Q_2 y_2$. Therefore the residual of $\yWKB = c_1 y_1 + c_2 y_2$ is $c_1 \e^2 Q_2 y_1 + c_2 \e^2 Q_2 y_2$ or $\e^2 Q_2 \yWKB$. 
\end{proof}

As long as $Q(x)$ is not zero, and $Q(x)$ has a bounded second derivative, the residual $\e^2 Q_2$ is going to be a \textsl{small} perturbation of the potential when $\e$ is small enough.  This means that the WKB approximation gets us the exact solution to $\e^2 y'' = \widetilde{Q}(x)y$, where $\widetilde{Q}(x) = Q(x) + O(\e^2)$, and moreover we have a simple, finite formula for that perturbation.

This will be useful in at least two ways. One is that we can interpret this perturbation in terms of the original model. 
If the extent of the uncertainty modelers have with respect to $Q$ is larger than $\e^2Q_2$ is, then, for all they know, this WKB solution might exactly describe the system. As such, the WKB solution would be as good as one could hope to obtain given what is known and unknown about the system of interest. 

Secondly, we can feed the negative of this residual back into the WKB process to produce an $O(\e^4)$ accurate solution, as measured by the backward error.  We will examine this idea in section~\ref{sec:IterativeWKB}. But first, we briefly discuss the assessment of the conditioning.

\subsection{Green's Functions \label{sec:green}}
The standard theory of Green's functions can be used to compute the condition number of the differential equation.  The purpose is not to estimate the forward error of the computation, however, although it can be used for that as well.  The purpose is to learn how sensitive the equation is to changes.  We find that the Green's function $G(x,\xi)$ for this class of problem is always of $O(1/\e)$, meaning that it is sensitive.  We leave the details to~\cite{CorlessFillion2025}, as this topic is well-understood. The main point for this paper is that these Green's functions are the \textsl{exact} Green's functions for the perturbed problem with $\widehat{Q}(x) = Q(x) + \e^2 Q_2(x)$.  This knowledge allows firm conclusions to be drawn.
\begin{example}
We continue with $\e^2 y'' + (1+x^8)y = 0$ subject to $y(-1)=2$, $y(1)=1$.  The boundary conditions for the Green's function are thus $G(-1,\xi)=0$, $G(1,\xi)=0$, $G(\xi^+,\xi)=G(\xi^-,\xi)$, and $G_x(\xi^+,\xi)-G_x(\xi^-,\xi) = -1/\e^2$. Maple gets a finite expression containing hypergeometric functions for $G(x,\xi)$ which we do not print here. We plot its contours, for $\e=1/13$, in figure~\ref{fig:hypergeometricG13}(b).
\begin{figure}
    \centering
    \subfigure[WKB Solution]{\includegraphics[width=0.48\textwidth]{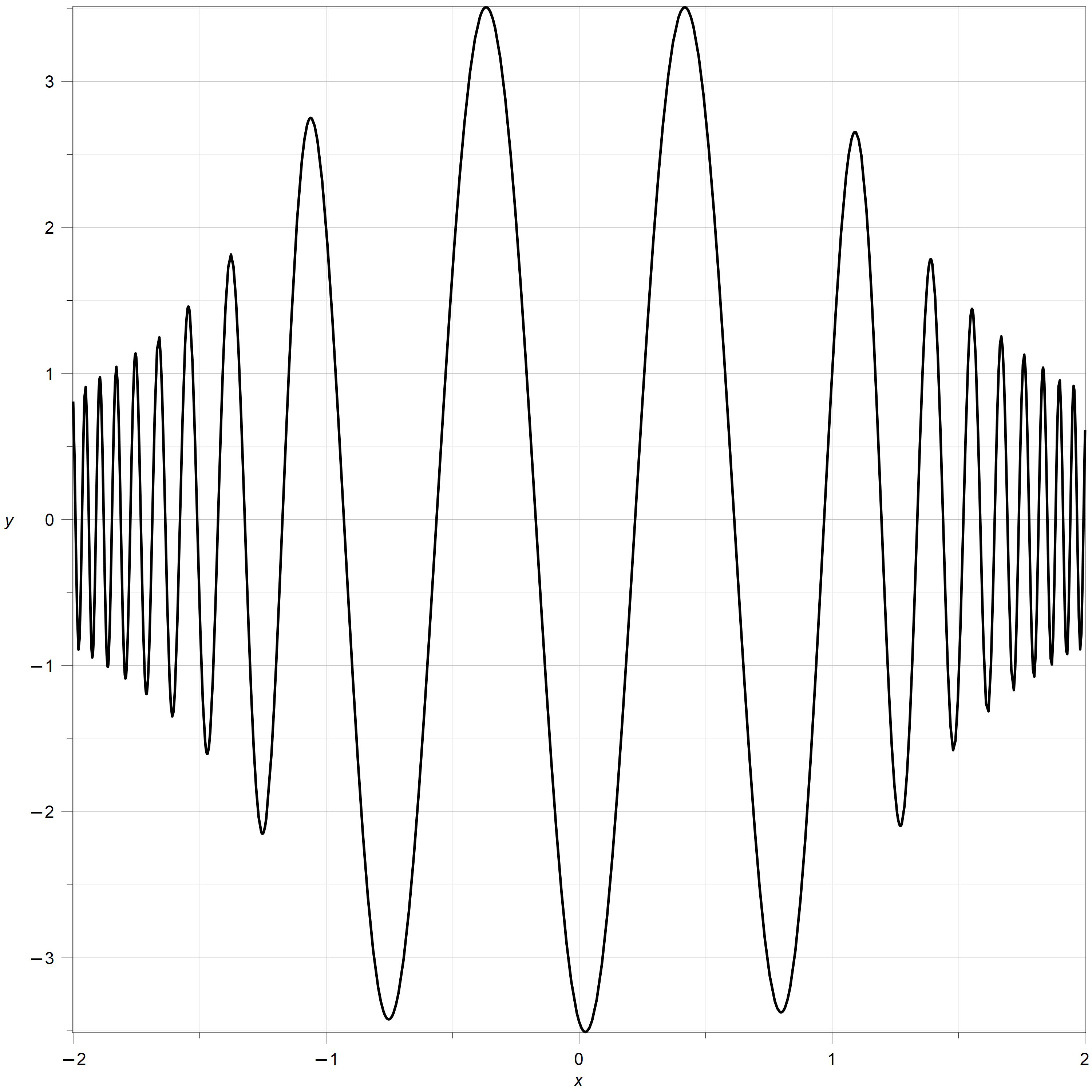}}
    \subfigure[Green's function]{\includegraphics[width=0.48\linewidth]{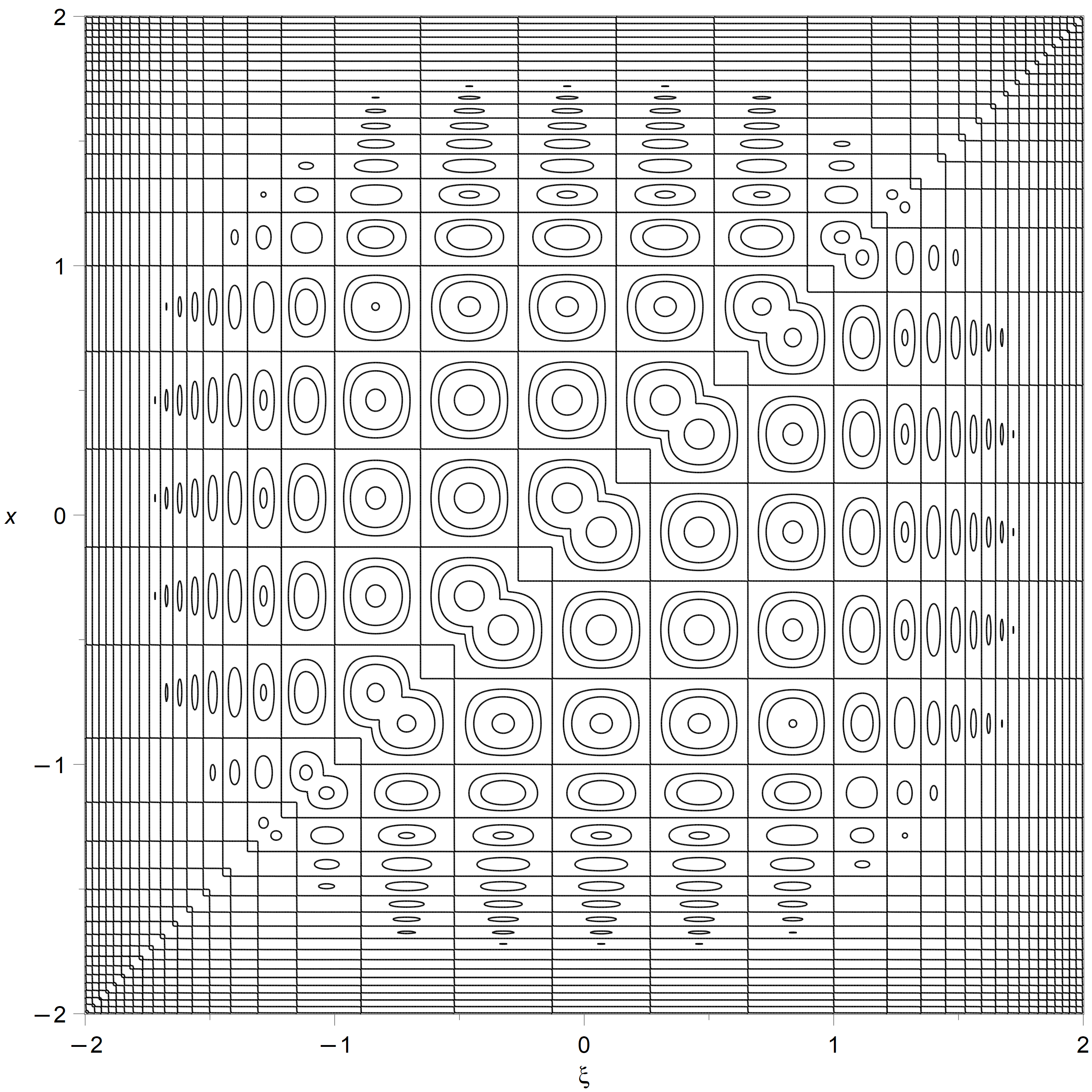}}
    \caption{(a) Solution by WKB of $\e^2 y'' + (1+x^8)y = 0$ subject to $y(-1)=2$, $y(1)=1$ when $\e=1/8$ (b) Contours for the Green's function for the same problem with the same $\e=1/8$. The contours are at $[-8, -5, -3, 0, 3, 5, 8]$. The maximum height of $G$ is $O(1/\e)$, as claimed. The residual is $\e^2 Q_2(x)$ where $Q_2(x) = 2 x^{6} \left(3 x^{8}-7\right)/(1+x^8)^2$. On $-2 \le x \le 2$, $Q_2(x)$ is no bigger than $3$ in magnitude.}
    \label{fig:hypergeometricG13}
\end{figure}
Then the difference between the reference solution to the original problem and the WKB solution can be written as
\begin{equation}
    \yWKB(x) - \yref(x) = \int_{x_0}^x G(x,\xi) \e^2 Q_2(\xi)\yref(\xi)\,d\xi
\end{equation}
Since $\|G\| = O(1/\e)$ the forward error is $O(\e)$ on compact intervals.
\end{example}

\subsection{Iterative WKB\label{sec:IterativeWKB}}
Section \ref{sec:optics} has shown how to perform a backward-error analysis 
of the WKB method as it applies to equation \ref{eq:WKBSchroedinger}. Section~\ref{sec:green} indicated how to understand the effects of such backward error using Green's functions.  But since this approach requires one to compute the residual and find which equation $\e^2y''=\widetilde{Q}(x)y$ we have solved, the idea of compensating for the difference between $Q(x)$ and $\widetilde{Q}(x)$ ahead of time emerges. This section explores the potential benefits of this idea.

\begin{example}
Let's consider iterating the WKB process, on a smooth example with $Q(x) = 1+x^2$. 
The WKB process will give us the exact answer not for that $Q$, but rather for $\widetilde{Q} = 1 + x^2 + \e^2 \left({ \left(3 x^{2}-2\right)}\right)/\left({4 \left(x^{2}+1\right)^{2}}\right)$. Here $Q(x)$ is analytic, not just twice continuously differentiable.  So what would happen if we tried instead to solve $\e^2y'' = \widehat{Q}(x)y$, where
\begin{equation}\label{eq:modifiedWKBex}
  \widehat{Q}(x) = Q(x) \textcolor{red}{-} \e^2 Q_2(x) = 1 + x^2 \textcolor{red}{-} \e^2 \frac{ \left(3 x^{2}-2\right)}{4 \left(x^{2}+1\right)^{2}}\>?
\end{equation}
Notice the deliberate opposite sign.  We are subtracting off ahead of time what is put in by the WKB process when applied to equation \ref{eq:WKBSchroedinger}.\index{WKB!subtracting off what will be put in}

The solution process needs the integral of the square root of $Q-\e^2 Q_2$, and we can do this perturbatively:
\begin{equation}\label{eq:intsqrtQ2}
  \int_{0}^{x} \sqrt{Q(\xi) - \e^2Q_2(\xi)}\,d\xi = \int_{0}^{x} \sqrt{Q(\xi)}\,d\xi - \e^2 \int_{0}^{x} \frac{Q_2(\xi)}{2\sqrt{Q(\xi)}}\,d\xi + O(\e^4)
\end{equation}
That second integral becomes the factor
\begin{equation}\label{eq:S2fac}
  F_2 = \exp\left( \e \frac{x \left(x^{2}+6\right)}{24 \left(x^{2}+1\right)^{\sfrac{3}{2}}} \right).
\end{equation}
Our modified solution will then be
\begin{equation}
    y=c_1 \widehat{Q}(x)^{-1/4} \exp( S_0/\e ) F_2 + c_2 \widehat{Q}(x)^{-1/4} \exp(-S_0/\e)/F_2
\end{equation}
for some arbitrary constants $c_1$ and $c_2$.  When we compute the residual of \textsl{this} solution, we find that this function is the exact solution of $\e^2y'' = Q^*(x)y$ where $Q^*(x) = Q(x) + \e^4 Q_4(x) + O(\e^5)$, where
\begin{equation}\label{eq:fourthorder}
  Q_4(x) = \frac{297 x^{4}-732 x^{2}+76}{64 \left(x^{2}+1\right)^{5}}\>.
\end{equation}
It turns out that this behaviour is quite general as far as solutions to equation \eqref{eq:WKBSchroedinger} are concerned, as we will see.  
See figure~\ref{fig:PerturbedPotential4th}.
\begin{figure}
  \centering
  \subfigure[From $Q=1+x^2$]{\includegraphics[width=0.45\textwidth]{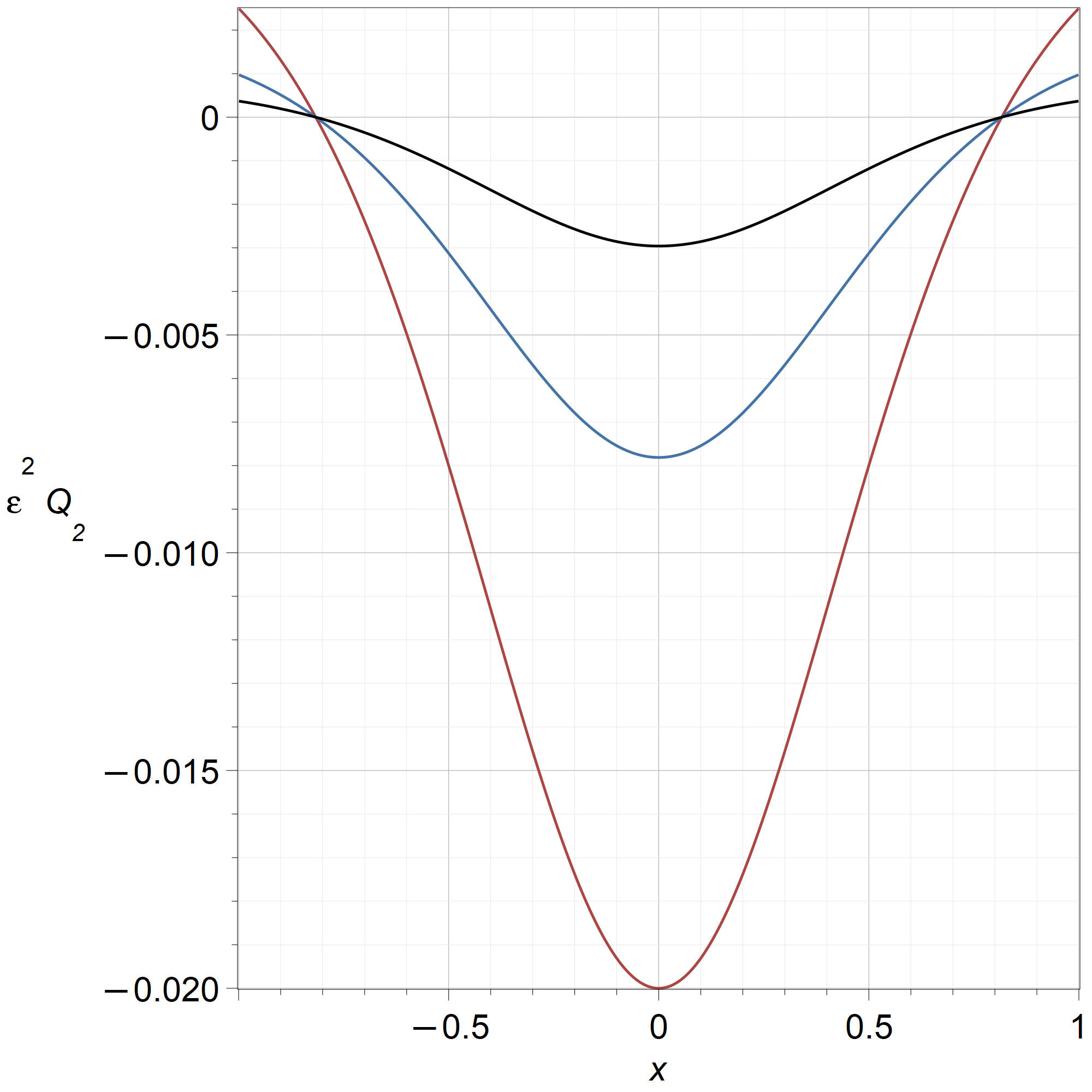}}
  \subfigure[From $Q=1+x^8$]{\includegraphics[width=0.45\textwidth]{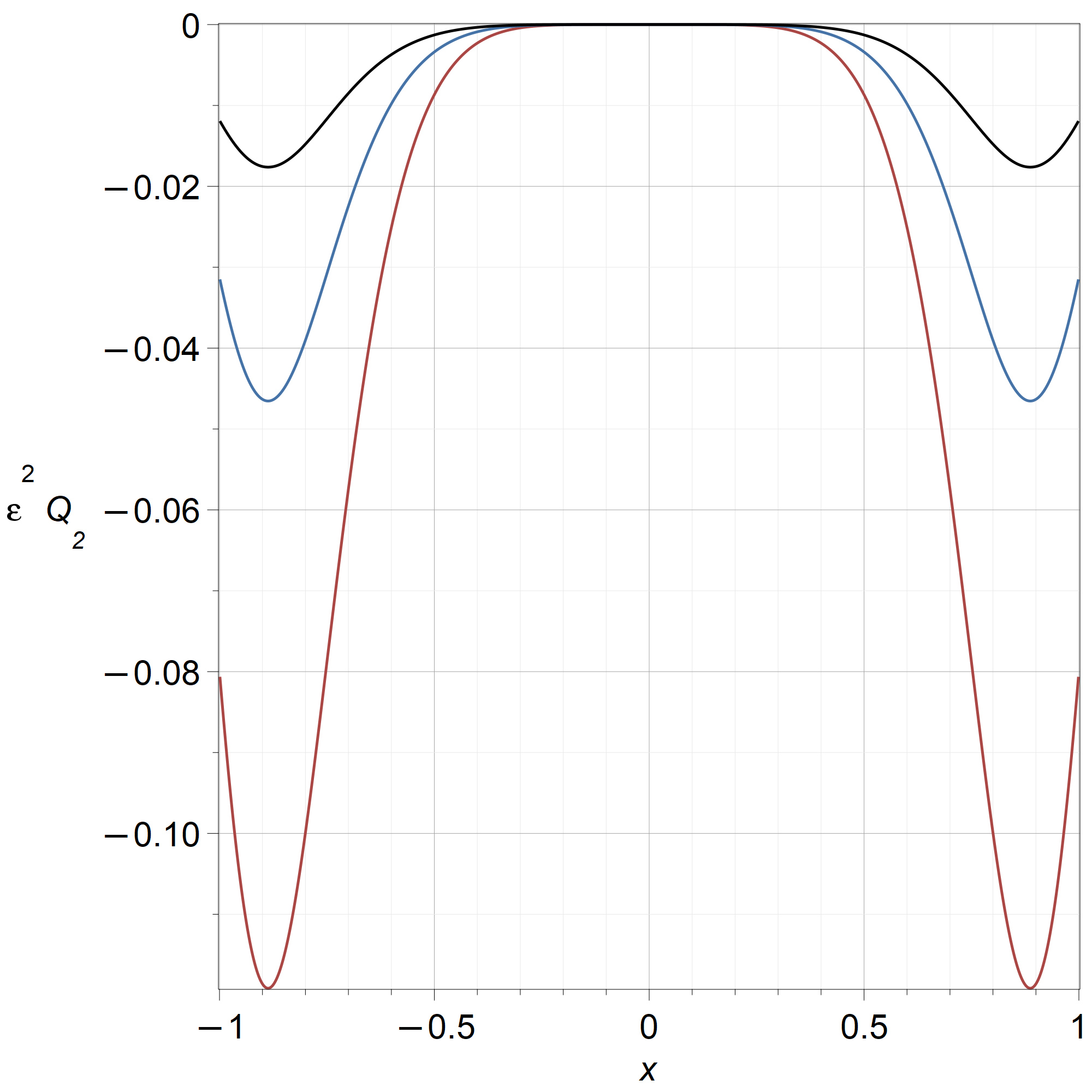}}\\  
  \subfigure[From $Q=1+x^2$]{\includegraphics[width=0.45\textwidth]{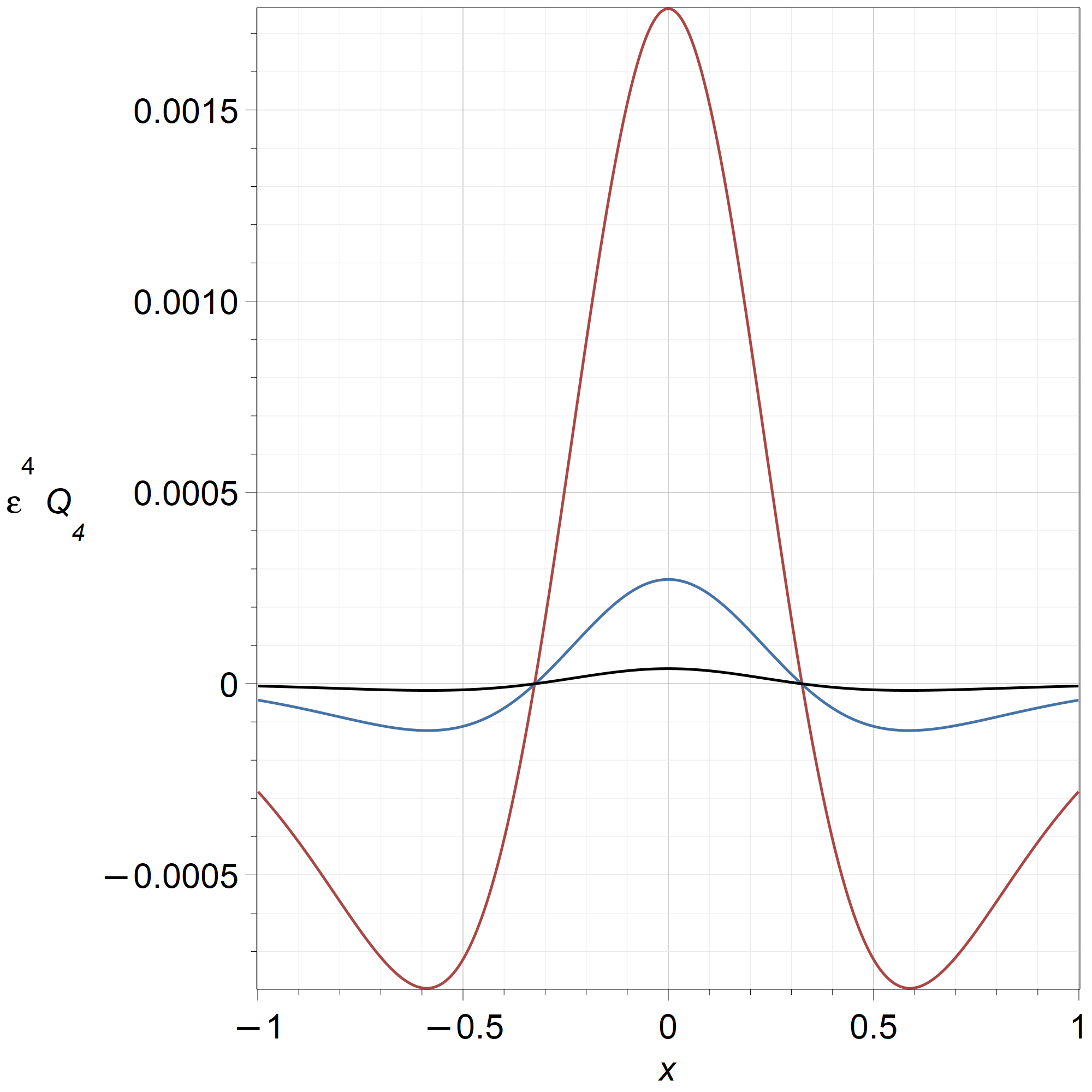}}
  \subfigure[From $Q=1+x^8$]{\includegraphics[width=0.45\textwidth]{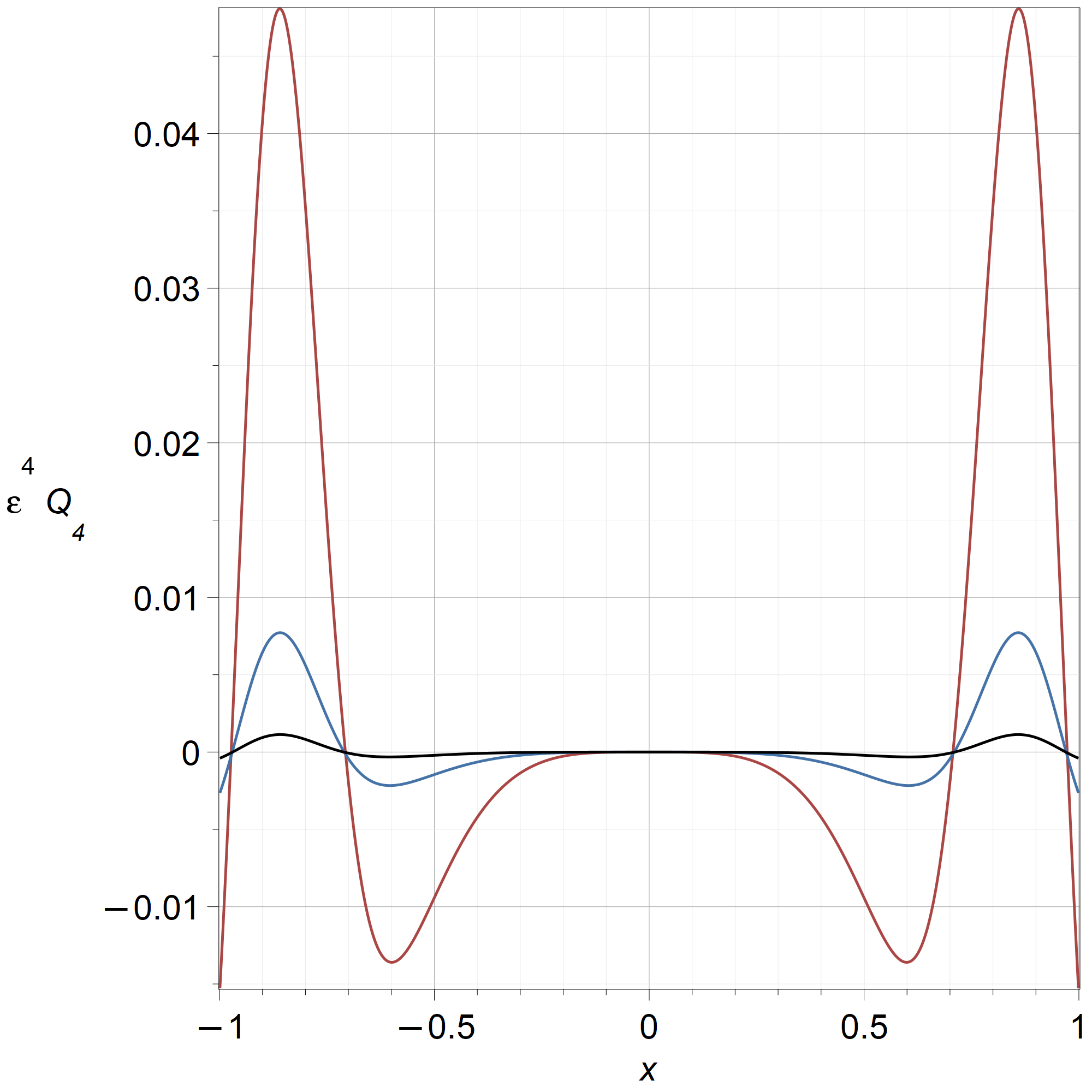}}
  \caption[Higher order perturbed potential]{(left) The perturbations
  $\e^2Q_2(x) = 3\e^2(x^2-2)/(4(1+x^2)^2)$ and $\e^4 Q_4(x) = \e^4{ (36 x^{4}-90 x^{2}+9)}/{(8 \left(x^{2}+1\right)^{5})} + O(\e^6)$ to the potential $1+x^2$ after one iteration of our basic perturbation method, starting from the WKB approximation. We show this for three different values of $\e$: $\e=1/5$ (red), $\e=1/8$ (blue), and $\e=1/13$ (black). One single iteration improves the backward error remarkably. (right) The same thing, but for the potential $Q(x)=1+x^8$. Here $Q_2 = 2 x^{6} \left(3 x^{8}-7\right)/(1+x^8)^2$. The expression for $Q_4$ is this time somewhat more complicated: to leading order, it is $Q_4 = 3 x^{4} \left(75 x^{24}-749 x^{16}+581 x^{8}-35\right)\e^4/Q(x)^5 + O(\e^6)$. The full residuals are plotted here for all figures, not series approximation to them.}\label{fig:PerturbedPotential4th}
\end{figure}

\end{example}

Applying this idea in general (for symbolic $Q$) gets us the following.
\begin{theorem}[Corollary to the WKB Backward Theorem]\label{thm:CorollaryWKBbackward} \index{WKB Backward Theorem!corollary}
As for Theorem~\ref{thm:BackwardWKB}, consider equation \eqref{eq:WKBSchroedinger}. Suppose $Q(x)$ is \textsl{four} times continuously differentiable, and is not zero on $a < x < b$. Let $\widehat{Q}(x) = Q(x) \textcolor{red}{-} \e^2 Q_2(x)$ where $Q_2(x)$ is as in Theorem~\ref{thm:BackwardWKB}.  If neither $Q(x)=0$ nor $\widehat{Q}(x) = 0$, then the WKB solution $y = c_1 \widehat{Q}^{-1/4}(x) \exp( \widehat{S}_0/\e ) + c_2 \widehat{Q}^{-1/4}(x) \exp( \widehat{S}_0/\e )$ to \textsl{this} problem, where
$$
\widehat{S}_0 = \int_a^x \sqrt{\widehat{Q}(\xi)}\,d\xi\>,
$$
has residual $r(x;\e) = \e^2 y'' - Q(x) y = \e^4 Q_4(x;\e)$ \textbf{in the original problem using $Q$ and not $\widehat{Q}$}, with
\begin{equation}\label{eq:modifiedResidualWKB2}
  r(x,\e)=\e^4 Q_4(x;\e) = \e^4\frac{K_1 + \e^2 K_2}{4096Q^6(x) \widehat{Q}^2(x)}\>,
\end{equation}
where
{\footnotesize
\begin{align}\label{eq:WKB2modrlabels}
  K_1 &= 8 Q \! \left(x \right)^{6} \left(Q^{iv}(x)\right)-56 Q \! \left(x \right)^{5} \left(Q'(x)\right) \left(Q'''(x)\right)-36 \left(Q''(x)\right)^{2} Q \! \left(x \right)^{5}\nonumber\\
  &\qquad{}+216 \left(Q''(x)\right) Q \! \left(x \right)^{4} \left(Q'(x)\right)^{2}-135 Q \! \left(x \right)^{3} \left(Q'(x)\right)^{4}
 \\
  K_2 &= -288 \left(Q''(x)\right)^{3} Q \! \left(x \right)^{3}+468 \left(Q''(x)\right)^{2} Q \! \left(x \right)^{2} \left(Q'(x)\right)^{2}+64 \left(Q''(x)\right) Q \! \left(x \right)^{4} \left(Q^{iv}(x)\right)\nonumber\\
  &\qquad{}+272 \left(Q''(x)\right) Q \! \left(x \right)^{3} \left(Q'(x)\right) \left(Q'''(x)\right)-540 \left(Q''(x)\right) Q \! \left(x \right) \left(Q'(x)\right)^{4}\nonumber\\
  &\qquad{}-80 Q \! \left(x \right)^{4} \left(Q'''(x)\right)^{2}-80 Q \! \left(x \right)^{3} \left(Q'(x)\right)^{2} \left(Q^{iv}(x)\right)-40 Q \! \left(x \right)^{2} \left(Q'(x)\right)^{3} \left(Q'''(x)\right)\nonumber\\
  &\qquad{}+225 \left(Q'(x)\right)^{6}\>.
\end{align}
}
Zeros of $\widehat{Q}(x)$ which are not also zeros of $Q(x)$ are termed \textbf{spurious turning points}.\index{turning point!spurious} We will see a method to remove them, so let us ignore them for the moment. 
Therefore under the hypotheses of this corollary this additional iteration provides the exact solution to the differential equation $\e^2 y'' = (Q(x) + \e^4 Q_4(x;\e))y$.
\end{theorem}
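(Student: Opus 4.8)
The plan is to re-use Theorem~\ref{thm:BackwardWKB} as a black box rather than redo the WKB expansion. Introduce the operator $\Phi[P] = 5(P'/(4P))^2 - P''/(4P)$, so that the order-$\e^2$ correction in~\eqref{eq:TildeQgeneral} is $Q_2 = \Phi[Q]$ and, by construction, $\widehat{Q} = Q - \e^2\Phi[Q]$. Since $Q$ is assumed four times continuously differentiable, $Q_2 = \Phi[Q]$ is twice continuously differentiable, hence so is $\widehat{Q}$; and away from the spurious turning points $\widehat{Q}\neq 0$. Thus the hypotheses of Theorem~\ref{thm:BackwardWKB} hold \emph{with $Q$ replaced by $\widehat{Q}$}, and that theorem tells us immediately that the physical-optics solution $y = c_1\widehat{Q}^{-1/4}\exp(\widehat{S}_0/\e) + c_2\widehat{Q}^{-1/4}\exp(-\widehat{S}_0/\e)$ is the \emph{exact} general solution of
\begin{equation}
  \e^2 y'' = \bigl(\widehat{Q} + \e^2\Phi[\widehat{Q}]\bigr)\,y\>.
\end{equation}
This is precisely why the corollary demands $C^4$ rather than the $C^2$ of Theorem~\ref{thm:BackwardWKB}: applying the backward theorem to $\widehat{Q}$ costs two extra derivatives.

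The residual in the \emph{original} problem then comes for free. Because $y$ satisfies the displayed equation exactly, I replace $\e^2 y''$ by $(\widehat{Q}+\e^2\Phi[\widehat{Q}])\,y$ and subtract $Q\,y$:
\begin{equation}
  r(x;\e) = \e^2 y'' - Q\,y = \bigl(\widehat{Q} + \e^2\Phi[\widehat{Q}] - Q\bigr)\,y = \e^2\bigl(\Phi[\widehat{Q}] - Q_2\bigr)\,y\>,
\end{equation}
using $\widehat{Q} - Q = -\e^2 Q_2$ in the last step. The relative residual is thus $\e^2(\Phi[\widehat{Q}] - Q_2)$, and the whole point is the cancellation hiding inside it: since $\widehat{Q}\to Q$ as $\e\to 0$ and $\Phi$ is continuous in its argument, $\Phi[\widehat{Q}] = \Phi[Q] + O(\e^2) = Q_2 + O(\e^2)$, so the bracket is $O(\e^2)$ and the residual is genuinely $O(\e^4)$. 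This is the exact sense in which the deliberate sign flip in $\widehat{Q}=Q-\e^2 Q_2$ pre-subtracts the backward error that WKB would otherwise reintroduce.

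It remains to pin down the $\e^4$ coefficient explicitly. I would expand $\Phi[\widehat{Q}]$ as a rational function of $Q,Q',Q'',Q''',Q^{iv}$ and $\e$ — the third and fourth derivatives enter precisely because $\Phi[\widehat{Q}]$ contains $\widehat{Q}''$, hence $Q_2''$, hence $Q^{iv}$, which is both what forces the $C^4$ hypothesis and what makes $Q^{iv}$ appear in $K_1$ and $K_2$ in~\eqref{eq:WKB2modrlabels}. Collecting everything over the single common denominator $4096\,Q^6\widehat{Q}^2$ (the constant $4096 = 16^3$ arising when one clears the nested $16\,(\cdot)^2$ denominators that $\Phi$ introduces) yields a polynomial numerator $K_1 + \e^2 K_2$, whose two pieces are the genuine $\e^4$ term and the trailing $\e^6$ term carried along because $\widehat{Q}$ is itself $\e$-dependent; matching gives~\eqref{eq:modifiedResidualWKB2}. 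This step is a large but entirely mechanical symbolic computation, which I would carry out in Maple. As a cross-check I would verify that, on setting $\e\to 0$ in the denominator, $K_1$ equals $-4096\,Q^8$ times the Fréchet derivative $D\Phi[Q]\cdot Q_2$ computed independently, since the leading residual must equal $-\e^4\,D\Phi[Q]\cdot Q_2$.

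The conceptual load is light — everything rests on invoking Theorem~\ref{thm:BackwardWKB} a second time and reading off a relative residual — so the one real obstacle is bookkeeping. Expanding $\Phi[\widehat{Q}]$ naively explodes combinatorially because it nests quotients and derivatives of $\widehat{Q}=Q-\e^2 Q_2$; the discipline that keeps it tractable is to work throughout over the fixed denominator $4096\,Q^6\widehat{Q}^2$ and to confirm the order-$\e^2$ cancellation \emph{symbolically} rather than assume it. Finally I would flag the sole analytic caveat, already named in the statement: the formula holds only where $\widehat{Q}\neq 0$, since $\widehat{Q}^{-1/4}$ and the integrand $\sqrt{\widehat{Q}}$ in $\widehat{S}_0$ are singular at the spurious turning points, and the factor $\widehat{Q}^2$ in the denominator of~\eqref{eq:modifiedResidualWKB2} shows the residual diverging there.
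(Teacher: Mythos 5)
Your proof is correct, and it reaches the conclusion by a genuinely different route than the paper. The paper's proof is entirely computational: a Maple procedure builds the WKB solution from a supplied potential and computes the relative residual against the original $Q$; it is run once on $Q$ and once on $\widehat{Q}=Q-\e^2Q_2$; the equality of the residuals of the two branches $y_1,y_2$ is verified \emph{computationally} (this is what lets the residual be pulled into the potential for an arbitrary $c_1y_1+c_2y_2$); and the denominator $4096\,Q^6\widehat{Q}^2$ and the numerator are then read off with \texttt{denom}, \texttt{collect}, and veiling. You instead obtain all of this structure analytically by applying Theorem~\ref{thm:BackwardWKB} to $\widehat{Q}$: exactness, branch-independence of the relative residual, and the identity $r=\e^2\bigl(\Phi[\widehat{Q}]-\Phi[Q]\bigr)y$ are inherited rather than recomputed, and the $O(\e^4)$ order drops out of the cancellation $\Phi[\widehat{Q}]-\Phi[Q]=O(\e^2)$. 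Your route also explains two things the paper's proof leaves implicit: why the hypothesis is $C^4$ (applying $\Phi$ to $\widehat{Q}$ costs two more derivatives) and why $Q^{iv}$ appears in $K_1,K_2$. Both proofs delegate the explicit formulas to a mechanical symbolic expansion, so your deferral of that step to Maple is exactly on par with the paper. What the paper's route buys is a self-contained, executable certificate of the displayed formulas; what yours buys is the mechanism, plus a template that iterates (it is the structure underlying Algorithm~\ref{alg:IWKB}).

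Two remarks. First, ``$\Phi$ is continuous in its argument'' is too weak to give $O(\e^2)$; continuity only gives $o(1)$. What you need---and clearly intend, given your cross-check---is that $\Phi[\widehat{Q}]$ is rational (hence smooth) in $\e^2$, $Q$, and the derivatives of $Q$, so that the Taylor expansion $\Phi[\widehat{Q}]=\Phi[Q]-\e^2\,D\Phi[Q]\cdot Q_2+O(\e^4)$ is legitimate. Second, your Fr\'echet-derivative cross-check is more valuable than you may realize, because carrying it out exposes a transcription inconsistency in the paper itself. For the paper's own example $Q=1+x^2$ one finds
\begin{equation*}
-D\Phi[Q]\cdot Q_2=\frac{36x^4-90x^2+9}{8\,(1+x^2)^5}\>,
\end{equation*}
which agrees with the caption of figure~\ref{fig:PerturbedPotential4th}; but the displayed $K_1$ of~\eqref{eq:WKB2modrlabels} gives $K_1/(4096\,Q^8)=(-36x^4+90x^2-9)/(256\,(1+x^2)^5)$, which is exactly $-\sfrac{1}{32}$ of the true leading coefficient. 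That factor is the $-32$ in the paper's own Maple output $-\e^6K_1-32\e^4K_2$: the theorem's displayed $K_1$ (and presumably $K_2$) are the raw unveiled Maple quantities with the constants $-32$ and $-1$ dropped in transcription. Your independent leading-order check catches this; the paper's purely computational proof, which waves at the relabeling with ``apart from numbering,'' does not.
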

\begin{proof}
This time we use Maple to prove this theorem. First, we define the following procedure:
\begin{lstlisting}[caption=A Maple Procedure for WKB for Schr\"odinger-type equations]
WKB2Q := proc(Q::operator, Qorig::operator, x, eps, {a := 0})
  local xi, residual1, residual2, S, y1, y2;
  S := int(sqrt(Q(xi)), xi = a .. x);
  y1 := exp(S/eps)/Q(x)^(1/4);
  residual1 := simplify(eps^2*diff(y1, x, x)/y1 - Qorig(x));
  y2 := exp(-S/eps)/Q(x)^(1/4);
  residual2 := simplify(eps^2*diff(y2, x, x)/y2 - Qorig(x));
  return [y1,y2], [residual1,residual2];
end proc:
\end{lstlisting}
We then execute this procedure by the following commands:
\begin{lstlisting}
macro( ep=varepsilon );
(secondordersol,residual0) := WKB2Q(x -> Q(x), x -> Q(x), x, ep);
residual0[1]-residual0[2]; # yields 0
Q1 := unapply( Q(x) - residual0[1], x );
(fourthordersol, residual1) := WKB2Q(Q1, x -> Q(x), x, ep);
residual1[1]-residual1[2]; # yields zero again.
\end{lstlisting}
The fact that the residual for $y_1$ is the same as the residual for $y_2$ means that it will be the same for a linear combination of the two, and therefore able to be pulled in to the potential.

The command
\begin{lstlisting}
denom( residual1[1] );
\end{lstlisting}
yields
\begin{equation}
16 \left(5 \left(\frac{d}{d x}Q \! \left(x \right)\right)^{2} \varepsilon^{2}-4 \left(\frac{d^{2}}{d x^{2}}Q \! \left(x \right)\right) \varepsilon^{2} Q \! \left(x \right)-16 Q \! \left(x \right)^{3}\right)^{2} Q \! \left(x \right)^{2} \>,
\end{equation}
which shows that the denominator of the residual contains the factor $Q_1$ as well as $Q$.  The command
\begin{lstlisting}
collect(numer(residual1[1]), ep, m -> LargeExpressions:-Veil[K](m))
\end{lstlisting}
yields
\begin{equation}
-\varepsilon^{6} K_{1}-32 \varepsilon^{4} K_{2}\>,
\end{equation}
giving the equations of the theorem (apart from numbering).

To recognize the denominator in the residual, issue the following command:
\begin{lstlisting}
spurious := denom(residual1[1]):
normal( spurious/Q1(x)^2 );
\end{lstlisting}
That last command gives $4096 Q^6(x)$.
For completeness, here are the commands to reveal the contents of $K_1$ and $K_2$:
\begin{lstlisting}
for k to 2 do
    K[k] = LargeExpressions:-Unveil[K](K[k]);
end do;
\end{lstlisting}
The numbering of the expressions from the output of that session (not shown, but you can execute the commands yourself) is different but equivalent to that of the theorem.
\end{proof}

This corollary shows that a single iteration improves the backward error from $O(\e^2)$ to $O(\e^4)$, provided that all the steps can be carried out and that there are no turning points or spurious turning points in the region of interest.

\begin{remark}
As is well-known, turning points where $Q(x)=0$ have to be treated specially.
Notice that the difficulty is visible in the solution itself, which contains a factor $Q(x)^{-1/4}$ that goes to infinity at zeros of $Q(x)$, but it's \textsl{more} visible in the residual, which goes to infinity like $Q(x)^{-2}$, even relative to the growing solution.  
\end{remark}

Spurious turning points\index{turning points!spurious} are places where the denominator of $\widehat{Q}(x)$ is zero but $Q(x)$ is not zero.  An example is shown in the exercises in~\cite{CorlessFillion2025}. Approximating one problem $\e^2 y'' = Q(x)y$ by another which has spurious turning points does not seem useful; in mitigation we point out that these spurious zeros are likely to exist only for very large $\e$ and thus unlikely to be important.  

They can be removed, however, by the following trick\index{turning point!removing spurious}.  Put
\begin{align}\label{eq:RemoveSpurious}
  \widehat{Q}^{-1/4} &= (Q - \e^2 Q_2)^{-1/4} \nonumber\\
                     &= Q^{-1/4}\left( 1 - \e^2 \frac{Q_2}{Q} \right)^{-1/4}\nonumber\\
                     &= Q^{-1/4}e^{\ln\left( 1 - \e^2 \frac{Q_2}{Q} \right)^{-1/4}} \\
                     &= Q^{-1/4} e^{\e^2 Q_2/(4Q) + O(\e^4)}\>.
\end{align}
This trick of taking the logarithm of a series and then exponentiating\index{series!taking logarithm of} it again is something that is seen in the Renormalization Group Method for perturbation~\cite{Kirkinis(2012)}.  The solution still has an $O(\e^4)$ residual, although we will see later that we may lose the strong backward error result: it might not be true that this renormalized solution is the exact solution of a problem of the same type as the original.

The goal of the iteration is to approximate a function $\widetilde{Q}$ that satisfies
\begin{equation}
    \widetilde{Q}(x) + \e^2 \left( 5\left(\frac{\widetilde{Q}'}{4\widetilde{Q}}\right)^2 - \frac{\widetilde{Q}''}{4\widetilde{Q}}\right) = Q(x)\>,
\end{equation}
and is ``close'' to the original and desired $Q(x)$. Starting with this $\widetilde{Q}$ would mean that the WKB algorithm would give us the exact solution to the original problem.

Although this looks like a differential equation for $\widetilde{Q}$, we would be happy to find any solution at all, so long as it's close to $Q$. To find a solution, we have to solve that equation for $\widetilde{Q}$, given $Q$.  We use functional iteration:
\begin{equation}
    Q_{n+1} = Q(x) - \e^2 \left( 5\left(\frac{{Q_n'}}{4{Q_n}}\right)^2 - \frac{{Q_n}''}{4{Q_n}}\right)
\end{equation}
with $Q_0(x) = Q(x)$.  This iteration generates one more term correct in the (even) power series for $\widetilde{Q}$ with each pass.  

\begin{remark}
The repeated differentiation in that iteration might be a concern. As is well-known, differentiation can introduce unwanted growth in the series coefficients.
\end{remark}

This gives us \textbf{The Iterative WKB Algorithm}\index{WKB!iterative version}, which we present in algorithm~\ref{alg:IWKB}.  This iteration has backward error that formally decreases with each iteration: $O(\e^2)$ at $n=0$, $O(\e^4)$ at $n=1$, $O(\e^6)$ at $n=2$, and so on. In practice, only the first few iterations are likely to be useful, because of the rapidly increasing complexity of $Q_n$, which makes more difficult the step labelled ``WKB'' in this algorithm (which just means compute the final $\int_{a}^{x} \sqrt{Q_N(\xi)}\,d\xi$ and (optionally) remove the spurious turning points in $Q_{N}^{-1/4}$ by the use of equation~\eqref{eq:RemoveSpurious}).  As previously stated, having more and higher derivatives of $Q$ becomes involved, which is also a concern.
\begin{algorithm}
\label{alg:IterativeWKB}
\begin{algorithmic}
\Procedure{IWKB}{$\mathcal{Q}$, $\e$, $N$ }\Comment{$\e^2 y'' =Q(x)y$  }
\State $n \gets 0$
\State $Q_n \gets \mathcal{Q}$ 
\While{$n < N$}
  \State $Q_{n+1} \gets \mathcal{Q} - \e^{2}\left(5(Q_n'/(4Q_n))^2 - (Q_{n}''/(4Q_n)\right)$
  \State $n \gets n+1$
\EndWhile
\State $y \gets \mathrm{WKB}(Q_N)$ \Comment{Use WKB on $\e^2y'' = Q_N y$ final step} 
\State \textbf{return}~$y$\Comment{solution $O(\e^{2N+2})$ backward error}
\EndProcedure
\end{algorithmic}
\caption{The iterated WKB algorithm.\label{alg:IWKB}}
\end{algorithm}
\begin{remark}
Does this iteration converge?  This does not seem to be a useful question. The normal $N$ to use for this algorithm is just $N=1$, and if that helps \textsl{a lot}, then you won't need $N=2$. If it doesn't help very much but does help a little, then one might try $N=2$. However, given how much more complicated the integrals get, the iteration is unlikely to be of much more use than the choice $N=1$ was, if one is doing symbolic integration. We will see in the next section a way to proceed to higher order with this iteration, however, by using Chebfun for the integration.
\end{remark}
\begin{example}
Consider $Q(x) = -(1+x^8)$, as in our first example.  Applying one step of our iterative process gives us the exact solution to $\e^2y'' = Q_1 y$ where
\begin{equation}
    Q_1 = -(1 + x^8) - \frac{3 \left(75 x^{24}-749 x^{16}+581 x^{8}-35\right) x^{4}}{\left(x^{8}+1\right)^{5}}\e^4 + O(\e^6)\>. 
\end{equation}
A plot of the rational function that is the coefficient of the $\e^4$ term shows that it is less than $35$ in magnitude. When $\e = 0.664$ approximately, there is a spurious turning point; but that $\e$ is so large that it's unlikely that the spurious turning point will be important.
\end{example}

\subsection{Comparing the iterative algorithm with standard WKB}
The standard WKB approximation can be carried out to whatever order is desired.  One chooses $n>1$ and posits $\mathcal{S} = S_0/\delta + S_1 + \delta S_2 + \cdots + \delta^{n-1} S_n$ and sets the first $n+1$ terms in the residual to be zero.  This gives well-known recurrence relations for the $S_k$ in terms of the previous $S_j$.  See for example~\cite{Bender(1978)} for a list.  The two approaches are strongly related, of course. We have, for instance, 
\begin{equation}
2 \left(\frac{d}{d x}S_{0} \! \left(x \right)\right) \left(\frac{d}{d x}S_{2} \! \left(x \right)\right)+\left(\frac{d}{d x}S_{1} \! \left(x \right)\right)^{2}+\frac{d^{2}}{d x^{2}}S_{1} \! \left(x \right) = 0
\end{equation}
to define $S_2(x)$.  Using what we know of $S_0$ and $S_1$ this becomes
\begin{equation}
2 \sqrt{Q \! \left(x \right)}\, \left(\frac{d}{d x}S_{2} \! \left(x \right)\right)+\frac{5 \left(\frac{d}{d x}Q \! \left(x \right)\right)^{2}}{16 Q \! \left(x \right)^{2}}-\frac{\frac{d^{2}}{d x^{2}}Q \! \left(x \right)}{4 Q \! \left(x \right)} = 0
\end{equation}
and we recognize our $Q_2(x)$ on the right.  Setting this to zero (note that the choice of sign for $\sqrt{Q(x)}$ is needed here to make it consistent) gives a solution with residual $O(\e^3)$.  Here, we have to do another integration in order to find $S_2(x)$.

The next term gives
\begin{equation}
\frac{d}{d x}S_{3} \! \left(x \right) = 
-\frac{15 \left(\frac{d}{d x}Q \! \left(x \right)\right)^{3}}{64 Q \! \left(x \right)^{4}}+\frac{9 \left(\frac{d^{2}}{d x^{2}}Q \! \left(x \right)\right) \left(\frac{d}{d x}Q \! \left(x \right)\right)}{32 Q \! \left(x \right)^{3}}-\frac{\frac{d^{3}}{d x^{3}}Q \! \left(x \right)}{16 Q \! \left(x \right)^{2}}
\end{equation}
and while the sign of the square root of $Q$ is immaterial this time, we still seem to have to do another integration. But this time the integral can be done symbolically, for any $Q(x)$:
\begin{equation}
    S_3(x) = \frac{5 \left(\frac{d}{d x}Q \! \left(x \right)\right)^{2}}{64 Q \! \left(x \right)^{3}}-\frac{\frac{d^{2}}{d x^{2}}Q \! \left(x \right)}{16 Q \! \left(x \right)^{2}}\>.
\end{equation}
Again we see our $Q_2(x)$, this time divided by $4Q$. Compare this to equation~\eqref{eq:RemoveSpurious}.

This suggests that both methods are comparable in effort.  But, since we have already written code for the basic WKB computation, and it can be re-used to carry out the iterative method, it seems operationally simpler to use the iterative scheme---except if we have to deal with spurious turning points, which the standard method does not produce.  If we \textsl{do} wish to remove the spurious turning points, then the procedure outlined previously, together with the perturbative nature of doing the integral of $\sqrt{Q(x) - \e^2 Q_2(x)}$, turns the iterative scheme into \textsl{exactly} the standard approach. So the new method looks the same as the old method, so far.

But it's not identical, and in particular the property that the relative residual of $y_1 = Q^{-1/4}\exp(S_0/\e - \e S_2)$ and the relative residual of $y_2 = Q^{-1/4}\exp(-S_0/\e + \e S_2)$ be the same does \textsl{not} hold: In the first case we get an absolute residual $\e^3 Q_3 y_1 + O(\e^4)$ but in the second we get $-\e^3 Q_3 y_2 + O(\e^4)$.  This surprised us. The difference is caused by the term below, which contains a square root of $Q$:
\begin{equation}
    Q_3 = \frac{4 Q \! \left(x \right)^{2} \left(\frac{d^{3}}{d x^{3}}Q \! \left(x \right)\right)-18 Q \! \left(x \right) \left(\frac{d^{2}}{d x^{2}}Q \! \left(x \right)\right) \left(\frac{d}{d x}Q \! \left(x \right)\right)+15 \left(\frac{d}{d x}Q \! \left(x \right)\right)^{3}}{32 Q \! \left(x \right)^{\frac{7}{2}}}\>.
\end{equation}
Because this is nonzero and has opposite signs in the residual for $y_1$ and in $y_2$, the relative residual of $c_1y_1 + c_2y_2$ is not proportional to $c_1y_1 + c_2y_2$.

Taking one more term, with $y_1 = Q^{-1/4}\exp(S_0/\e - \e S_2 + \e^2 S_3)$ and  $y_2 = Q^{-1/4}\exp(-S_0/\e + \e S_2 + \e^2 S_3)$ we \textsl{almost} recover equality: The relative residual of $y_1$ is now $\e^4 Q_4 + \e^5 Q_5 + \cdots$ while the relative residual of $y_2$ is $\e^4 Q_4 - \e^5 Q_5 + \cdots$, so the difference does not appear until the $O(\e^5)$ term in the residual:
\begin{equation}
    Q_5 = -\frac{K_5}{1024 Q \! \left(x \right)^{\frac{13}{2}}}
\end{equation}
where
\begin{align}
    K_5 =&\> 72 \left(\frac{d^{2}}{d x^{2}}Q \! \left(x \right)\right)^{2} Q \! \left(x \right)^{2} \left(\frac{d}{d x}Q \! \left(x \right)\right)-150 \left(\frac{d^{2}}{d x^{2}}Q \! \left(x \right)\right) Q \! \left(x \right) \left(\frac{d}{d x}Q \! \left(x \right)\right)^{3}\nonumber\\
    &{}-16 \left(\frac{d^{2}}{d x^{2}}Q \! \left(x \right)\right) Q \! \left(x \right)^{3} \left(\frac{d^{3}}{d x^{3}}Q \! \left(x \right)\right)+75 \left(\frac{d}{d x}Q \! \left(x \right)\right)^{5}\nonumber\\
    &\qquad{}+20 \left(\frac{d}{d x}Q \! \left(x \right)\right)^{2} \left(\frac{d^{3}}{d x^{3}}Q \! \left(x \right)\right) Q \! \left(x \right)^{2}
\>.
\end{align}

This means that we cannot, in general, interpret the standard WKB solutions with $n>1$ as being the exact solution to a nearby problem of the same kind. The third order truncation gives the solution to $\e^2 y'' = Q y + O(\e^3)$ but the $O(\e^3)$ term cannot be brought in to the $Q$ term.  The fourth order truncation (unlike the iterative method) gives the exact solution to $\e^2 y'' = (Q + \e^4 Q_4)y + O(\e^5)$ where the $O(\e^5)$ term cannot be brought in to the $Q$ term.  This may be an advantage to the iterative method, or it may not be significant at all, depending on the problem at hand. 

We point out that the iterative scheme has a residual that is even in the variable~$\e$.  This means that there are no odd powers of~$\e$ in the residual.

Another difference worth mentioning concerns the scope of the methods. The standard WKB method is applicable to many equations that are not explicitly in the form of equation~\eqref{eq:WKBSchroedinger}, but can be reduced to that form. For instance, equations of the form $y'' + a(x)y' + b(x)y = 0$ can be transformed by use of the Sturm transformation \cite[p.~12]{Smith1985}\index{Sturm transformation}, by setting 
\begin{equation}\label{eq:SturmTransformation}
  v(x) = \exp\left( \frac12 \int_{x_0}^x a(\xi)\,d\xi \right) y(x)\>.
\end{equation}
This transforms the equation to
\begin{equation}\label{eq:SturmTransformed}
  v''(x) + c(x) v(x) = 0 \>,
\end{equation}
where
\begin{equation}\label{eq:SturmTc}
c(x) = \frac14\left( 4b(x) - a^2(x) - 2a'(x) \right)\>,
\end{equation}
and where the initial values transform to $v(x_0) = y(x_0)$ and $v'(x_0) = y'(x_0) + a(x_0)y(x_0)/2$. However, there are cases where this transform will cause difficulties with the backward error discussed here. See for instance exercise { 8.7.13} in \cite{CorlessFillion2025} for a case where adding a first derivative term adds complications.  Moreover, Murdock points out other difficulties with this transformation in~\cite{Murdock1999}.

\section{A new hybrid method\label{sec:hybrid}}
The bottleneck for symbolic computing with the WKB method is the computation of $\int \sqrt{Q(\xi)}\,d\xi$. Even if symbolic integration succeeds, the result may require more careful handling than one thinks.  For example, consider the problem
\begin{equation}
    \e^2 y'' = \cosh(x) y
\end{equation}
subject to the boundary conditions $y(-1)=1$ and $y(1)=1$. Maple gives an answer that can be ``simplified'' to the following unsatisfactory expression:
\begin{equation}
    \int \sqrt{\cosh \xi}\,d\xi = -\frac{\sqrt{2}\, \sqrt{1-\cosh \! \left(\xi \right)}\, \sqrt{-\cosh \! \left(\xi \right)}\, E\! \left(\cosh \! \left(\frac{\xi}{2}\right), \sqrt{2}\right) \mathrm{csch}\! \left(\frac{\xi}{2}\right)}{\sqrt{\cosh \! \left(\xi \right)}}\>.
\end{equation}
If instead we evaluate a definite integral where we let Maple know that $x > 0$ via
\begin{lstlisting}
int(sqrt(cosh(xi)), xi = 0 .. x) assuming x > 0;
\end{lstlisting}
then the result is much more palatable:
\begin{equation}\label{eq:ellipticPi}
    \int_0^x \sqrt{\cosh \xi}\,d\xi = \sqrt{2}\, \Pi \! \left(\sqrt{\frac{\cosh \! \left(x \right)-1}{\cosh \! \left(x \right)}}, 1, \frac{\sqrt{2}}{2}\right)\>.
\end{equation}
The WKB expressions involve the \lstinline{Elliptic} functions $E$ and $\Pi$, which are implemented quite well in Maple. Unfortunately, the expression in equation~\eqref{eq:ellipticPi}  is correct only for $x > 0$.  The integral is the negative of that if $x < 0$. Such ``discontinuities in expressions'' for a smooth function---the solution to the original differential equation is a generalized Mathieu function and is entire---are quite common, because the alphabet of special functions frequently has places where one must switch representations.  

Hence if we wish to preserve the advantages of the WKB method, while realizing that the method gives exact solutions to slightly different problems anyway, we might try to approximate $Q(x)$ to start with in a way that makes integration of $\sqrt{Q(x)}$ simple.  A natural idea is to use Chebfun~\cite{battles2004extension} to replace the awkward special functions with approximation by Chebyshev polynomials.  We can demonstrate the idea using Maple's facilities for Chebyshev expansion, which themselves are quite advanced (and indeed are among the oldest facilities in Maple, dating back to~\cite{geddes1993package}).

For this example, approximate $\sqrt{\cosh x}$ on $-1 \le x \le 1$ by a Chebyshev expansion by issuing the command
\begin{lstlisting}
f := numapprox[chebyshev]( sqrt( cosh(xi) ), xi=-1..1 );
\end{lstlisting}
which yields
\begin{align}
f(x) =&     1.12193597495626 T \! \left(0, \xi \right)+ 0.121038841819815 T \! \left(2, \xi \right)\nonumber\\
&{}- 0.0008283990372008 T \! \left(4, \xi \right)+ 0.0000649230623055 T \! \left(6, \xi \right)\nonumber\\
&{}- 3.577974919\times 10^{-6} T \! \left(8, \xi \right)+ 2.17976072\times 10^{-7} T \! \left(10, \xi \right)\nonumber \\
&{}- 1.406882\times 10^{-8} T \! \left(12, \xi \right)
+ 9.46440\times 10^{-10} T \! \left(14, \xi \right)- 6.5657\times 10^{-11} T \! \left(16, \xi \right)\nonumber\\
&{}+ 4.664\times 10^{-12} T \! \left(18, \xi \right)
- 3.38\times 10^{-13} T \! \left(20, \xi \right)+ 2.5\times 10^{-14} T \! \left(22, \xi \right)\nonumber\\
&{}- 2.\times 10^{-15} T \! \left(24, \xi \right)\>.
\end{align}
This approximation is constructed to be accurate to double precision on the stated interval, which admittedly was chosen to be convenient for unscaled Chebyshev polynomials.

The notation $T(n,\xi)$ is an older ``inert'' Maple notation for the Chebyshev polynomials, suitable for further manipulation such as integration using the rules $\int T_0(x)\,dx = T_1(x)$, $\int T_1(x)\,dx = (T_0(x)+T_2(x))/2$, and $\int T_k(x)\,dx = T_{k+1}(x)/(2(k+1)) - T_{k-1}(x)/(2(k-1))$.  

The integral of this expression is 
\begin{equation}
   F(x) =  \int_0^x f(\xi)\,d\xi = \sum_{k=0}^{11} c_{2k+1} T_{2k+1}(x)
\end{equation}
where we computed all twelve coefficients $c_{2k+1}$  but do not print them here, for space reasons; for instance $c_1 = 1.06141655404635$.  As in Chebfun, this expression differs from the true integral only by about rounding error levels (in this case less than $\snot{5}{-15}$).

Higher precision could be used if desired. \textsl{Lower} precision could be used if desired, because with (say) $\e = 1/100$ the residual is going to be $O(\e^2)$ or about $10^{-4}$ anyway, so working with Chebyshev polynomial expressions accurate to $10^{-15}$ seems like overkill. Working to six figures would give
\begin{align}
    f(x) &=  1.12194+ 0.121039 T_{2}\! \left(x \right)- 0.000828399 T_{4}\! \left(x \right) \\
    F(x) &= 1.06142 T_{1}\! \left(x \right)+ 0.0203112 T_{3}\! \left(x \right)\>.
\end{align}
In practice it's pretty convenient to leave the expressions accurate to double precision, because then they are easier to check.

Now the WKB-like expression
\begin{equation}
    y = c_1 f(x)^{-1/2} e^{F(x)/\e} + c_2 f(x)^{-1/2} e^{-F(x)/\e}
\end{equation}
(note the powers $-1/2$ and not $-1/4$ for $f(x)$)
gives \textsl{nearly} the exact solution to the perturbed problem discussed before, because $f(x)^2 = \widetilde{Q} \approx \cosh(x)$ up to rounding error. The residual $r(x) = \e^2 y'' - f^2(x) y$ is, as before, $-\e^2 (\widetilde{Q}''/(4\widetilde{Q}) - 5(\widetilde{Q}'/(4\widetilde{Q}))^2) y$, meaning that we have exactly solved a problem with a perturbed potential that is $O(\e^2)$ plus the rounding error difference to $\cosh(x)$.  

This time there are no alternative representations needed for different regions. 

For this example, the solution has two simple boundary layers of width $O(\e)$ at each end, and the graph is relatively uninteresting.  Indeed, standard numerical methods such as MATLAB's \lstinline{bvp4c}~\cite{kierzenka2008bvp} work quite well on this problem for reasonably sized $\e$. 

\begin{example}
Let us try a harder example, with $-\cosh(x)$ instead of $\cosh(x)$, which produces densely oscillatory behaviour as $\e \to 0$.  All of the above Chebyshev polynomials can be re-used.  The only difference is that the exponentials are now multiplied by $i$:
\begin{equation}\label{eq:ChebyshevWKB}
    y = c_1 f(x)^{-1/2} e^{iF(x)/\e} + c_2 f(x)^{-1/2} e^{-iF(x)/\e}
\end{equation}

The residuals are the same as before.  We identify $c_1$ and $c_2$ by solving linear equations, to get $c_1 = c_2 \approx 0.743437$, when $\e = 1/21$ (a convenient value for plotting).  See figure~\ref{fig:chebyWKB}. When $\e=1/100$ the oscillations are more dense but the solution process is carried out without difficulty.

\begin{figure}
    \centering
    \includegraphics[width=0.7\textwidth]{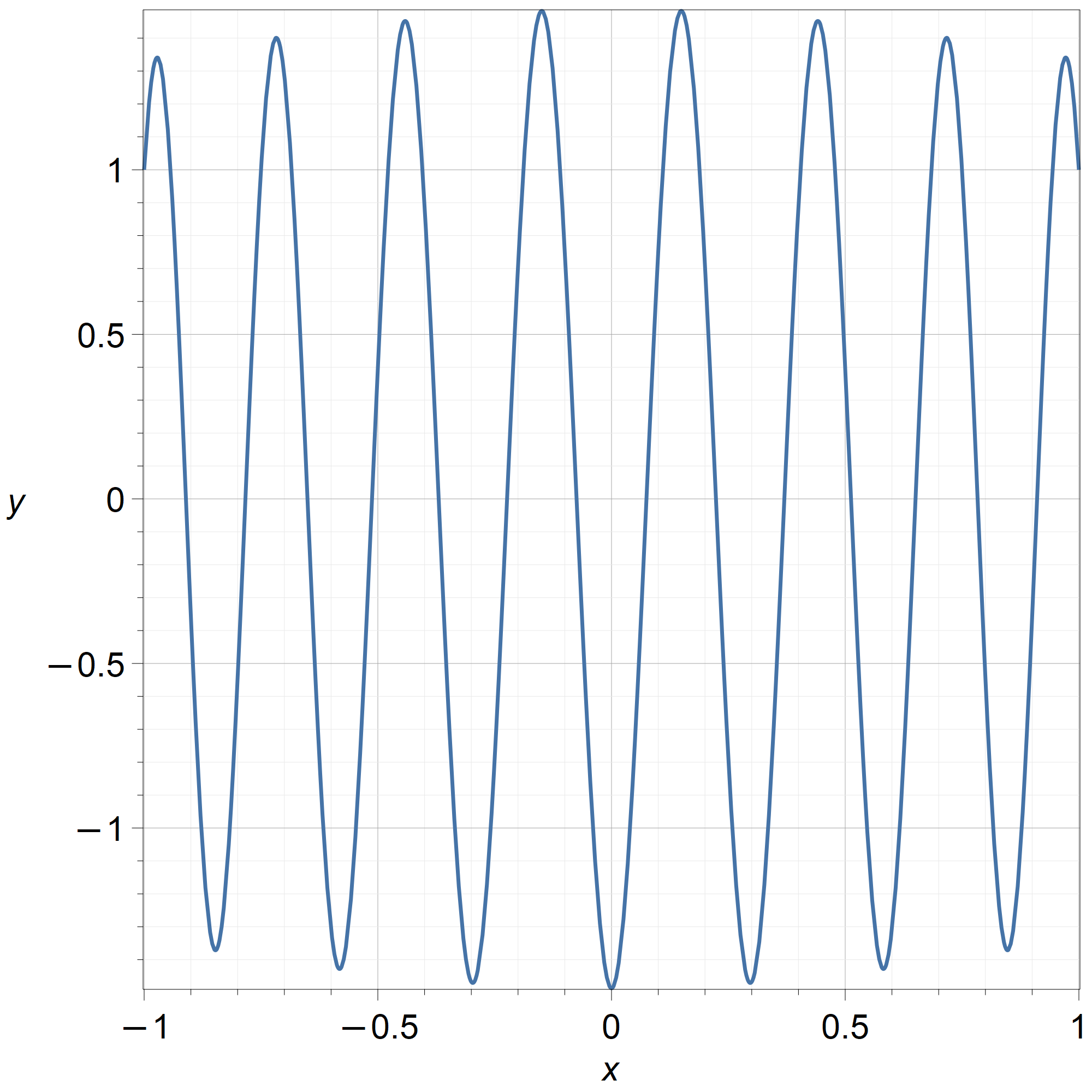}
    \caption{The Chebyshev--WKB solution to $\e^2 y'' + \cosh(x) y =0$ subject to $y(-1) = y(1) = 1$, when $\e=1/21$.}
    \label{fig:chebyWKB}
\end{figure}

\end{example}
For both those examples, the exact reference solution to the original equation is available in terms of solutions to the Mathieu equations.  Also, the WKB bottleneck integral can be carried out explicitly in terms of elliptic functions, which actually work well in Maple (if one is careful to use a piecewise representation for $x>0$ and for $x \le 0$).  The real strength of the Chebyshev approximation method would be shown by an example where none of those things were true.

\begin{example}
Suppose for instance that the potential is $2$ at $x=\pm 1$ and has zero derivative there, and suppose that the potential is $1$ at $x=\pm 1/2$ and again has zero derivative there.  This gives a kind of ``double well'' potential.  See figure~\ref{fig:WKBHardExample}.

\begin{figure}
\centering
\subfigure[Potential  \label{fig:WKBHardExample}]{\includegraphics[width=.48\textwidth]{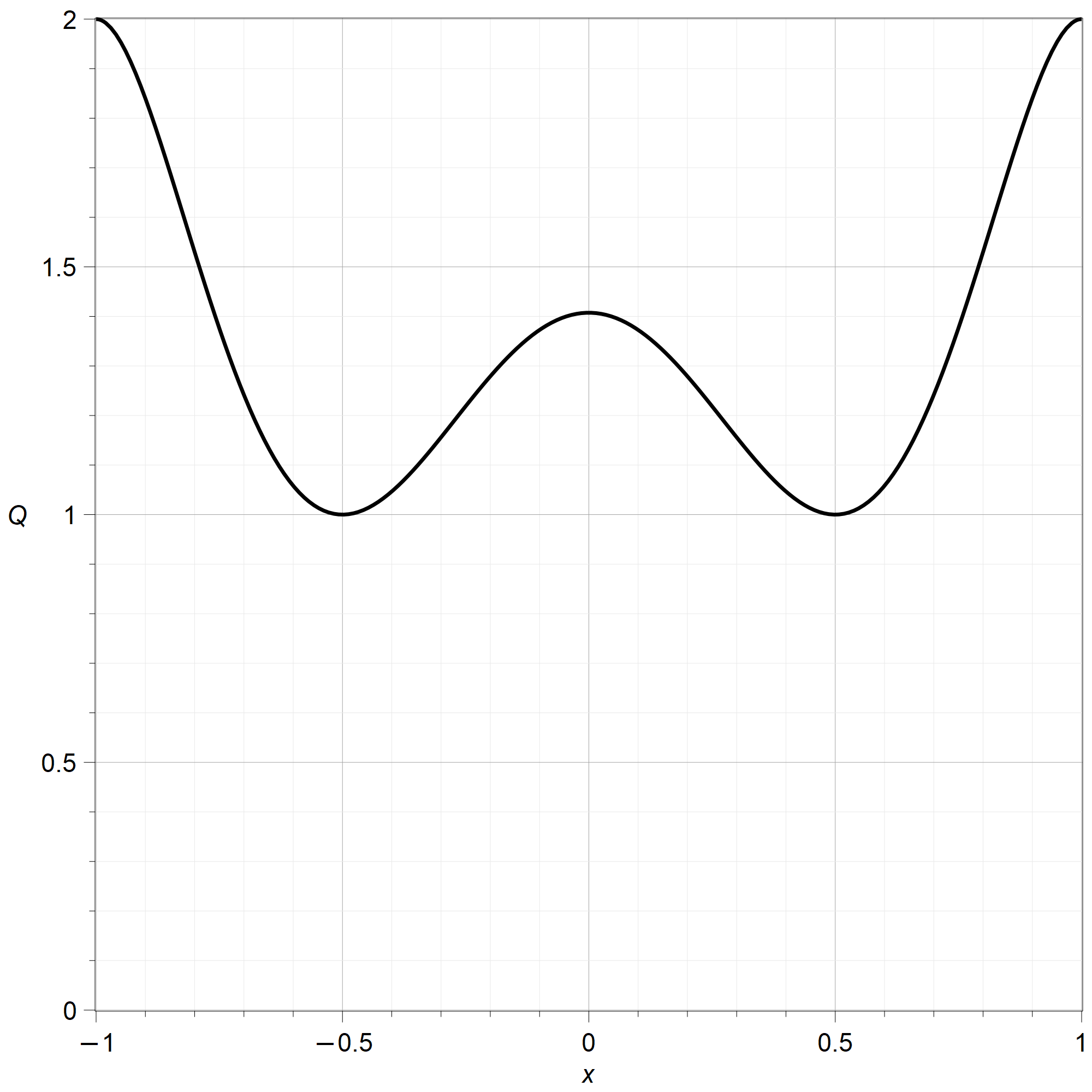}}
\subfigure[Residual \label{fig:HardResidual}]{\includegraphics[width=.48\textwidth]{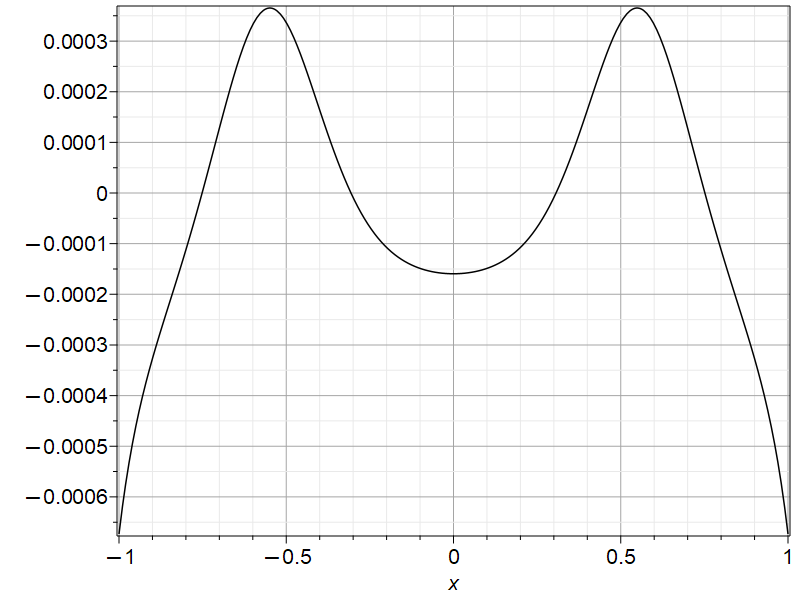}}
\caption[A hard example]{(left) A potential $Q(x)$ that is $2$ at the endpoints and $1$ at $\pm 1/2$, with zero derivatives at all four places.  (right) The perturbation $\e^2 Q_2$ to the potential when $\e = 1/89$.  If this were added to the potential on the left, it would not be visibly different. }
\end{figure}

The smallest degree polynomial that fits this data is $Q(x) = (38-96 x^{2}+240 x^{4}-128 x^{6})/27$.  Attempting the WKB procedure on this gives integrals for $\sqrt{Q}$ that Maple does not know how to express except as quadratures.  If we are willing to evaluate the integrals numerically, the WKB procedure still works, albeit slowly.

But if we approximate $\sqrt{Q}$ by a sum of Chebyshev polynomials, we get
\begin{equation}
    \sqrt{Q(\xi)} = \sum_{k=0}^{37} c_{2k} T_{2k}(\xi)
\end{equation}
which integrates easily to get
\begin{equation}
    \int_0^x \sqrt{Q(\xi)}\,d\xi = \sum_{k=0}^{37} C_{2k+1} T_{2k+1}(x)\>.
\end{equation}
That's quite a few coefficients, but the computer doesn't mind. We take $y(0)=1$ and $y'(0)=0$ as initial conditions. When $\e = 1/8$, the residual is less than 5\% of the value of $Q$ across the interval (whereas $y$ is bigger than $10^4$ near $x = \pm 1$).

If instead we use boundary conditions $y(-1) = y(1) = 1$, then the WKB solution varies exponentially over 40 orders of magnitude for $\e=1/89$ down to about $10^{-42}$ at the origin.  For this $\e$, the perturbation $\e^2 Q_2$ to the potential has magnitude less than $0.0006$.  See figure~\ref{fig:HardResidual}.

This suggests that Chebyshev approximation of the potential can be a useful hybrid perturbation technique.
\end{example}

Finally, we try it in Chebfun proper. We adapt one of the example integrals from the web page, namely \href{https://www.chebfun.org/examples/quad/SymbolicNumeric.html}{the Symbolic-Numeric example}, where Nick Trefethen shows that Chebfun quadrature can beat symbolic integration.  We chose the potential $Q(x) = -1-|x|$ with initial conditions to get a cosine WKB formula. We have also done the difficult example $Q(x) = -128x^6/27+80x^4/9-32x^2/9+38/27$ (not shown here).

The code is below.  The results are shown in figure~\ref{fig:chebfunex}.
\begin{figure}
\centering
\subfigure[Solution for $\e=1/233$  \label{fig:WKBCheb}]{ \includegraphics[width=0.45\textwidth]{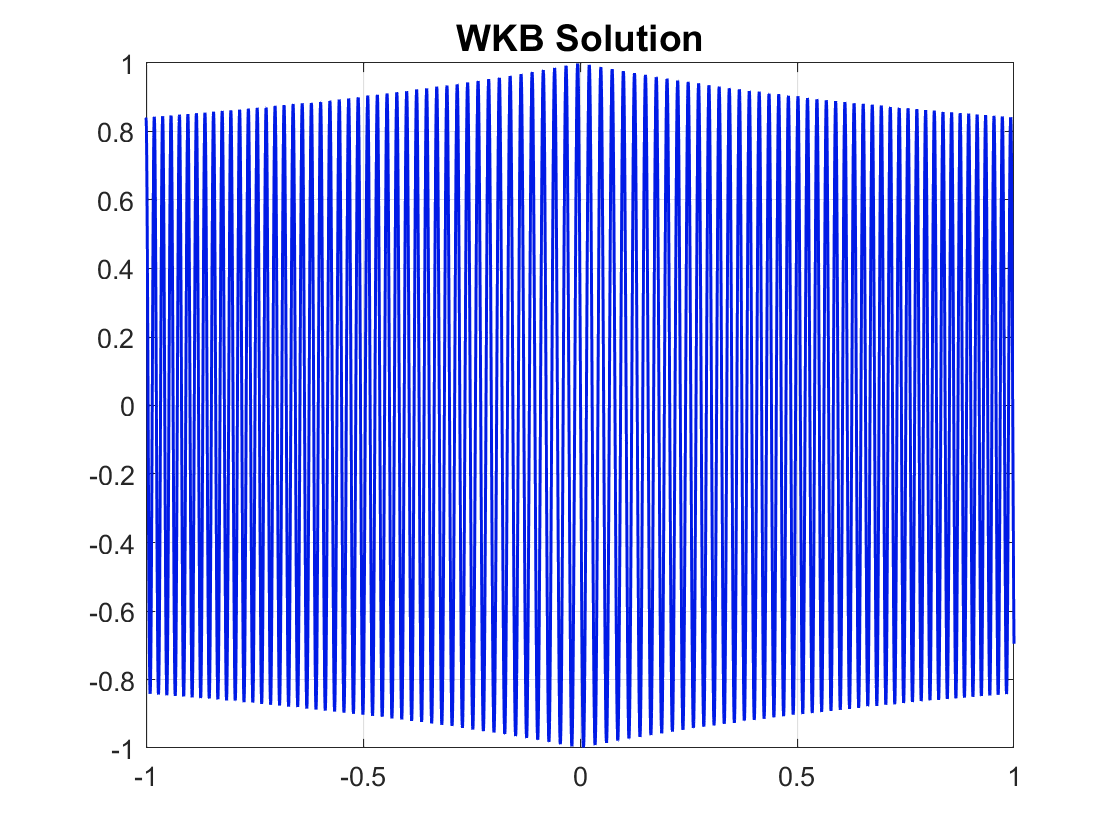}
 }
\subfigure[Residual for $\e=1/233$\label{fig:Chebres}]{\includegraphics[width=.48\textwidth]{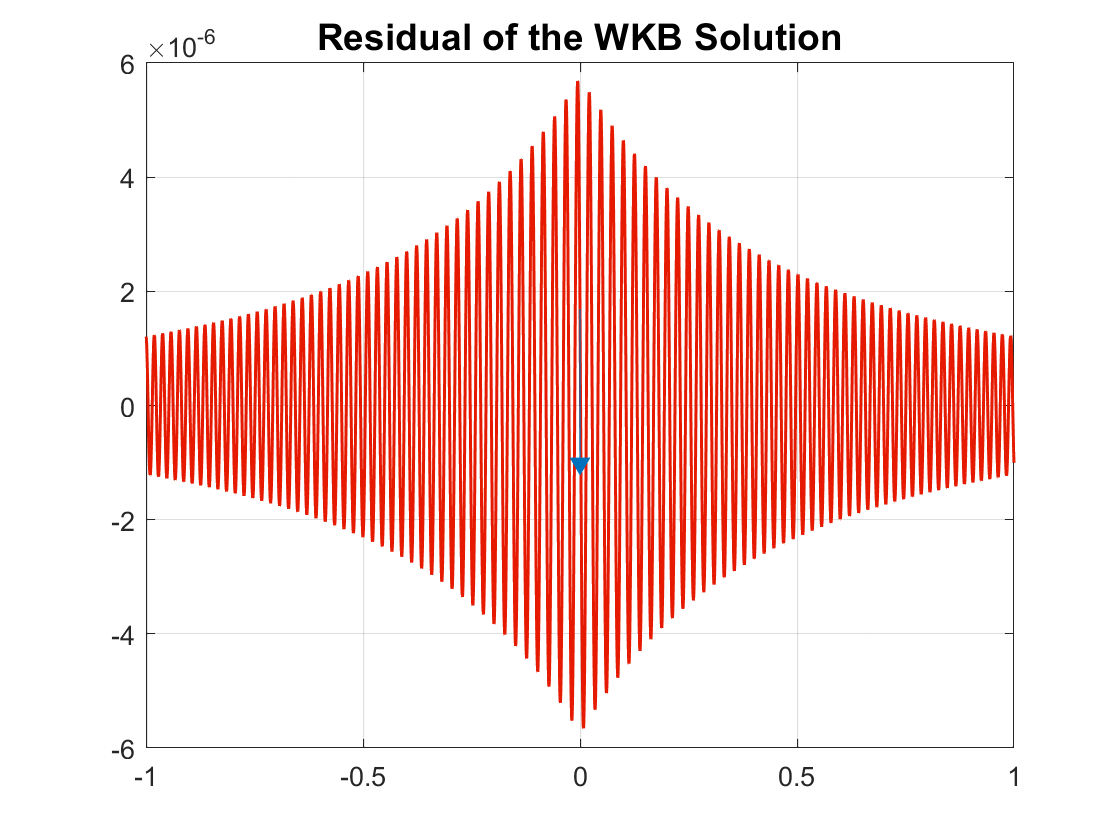}}
\caption[Chebfun and WKB]{(left) The WKB solution by Chebfun to $\e^2y'' + (1+|x|)y = 0$ for $\e=1/233$.  (right) The absolute residual in that solution, which is uniformly small. The vertical scale on the left figure is from $-1$ to $1$ while that on the right is from $-\snot{6}{-6}$ to $\snot{6}{-6}$. \label{fig:chebfunex} }
\end{figure}

\begin{lstlisting}
LW = 'LineWidth'; CO = 'Color'; FS = 'FontSize';
Q = chebfun(@(x) 1+abs(x), 'splitting', 'on')
f = sqrt(Q); # Q really -Q
fi = cumsum(f)
ep = 1.0/233.0;
y = f^(-0.5).*cos( fi/ep )
figure(1),plot(y,LW,1.2,CO,[0 .1 .9]), grid on
title('WKB Solution',FS,14)
res = ep^2*diff(diff(y)) + Q*y;
figure(2),plot(res,LW,1.2,CO,[0.9 .1 0]), grid on
title('Residual of the WKB Solution',FS,14)
\end{lstlisting}

We now try the iterated WKB using Chebfun, for $Q(x) = \cosh(x)$.  Since $Q$ is analytic, the higher derivatives in the error terms are not a problem.
The code is below.  The results are shown in figure~\ref{fig:iteratedchebfunex}.
\begin{figure}
\centering
\subfigure[Forward error \label{fig:Forward}]{ \includegraphics[width=0.45\textwidth]{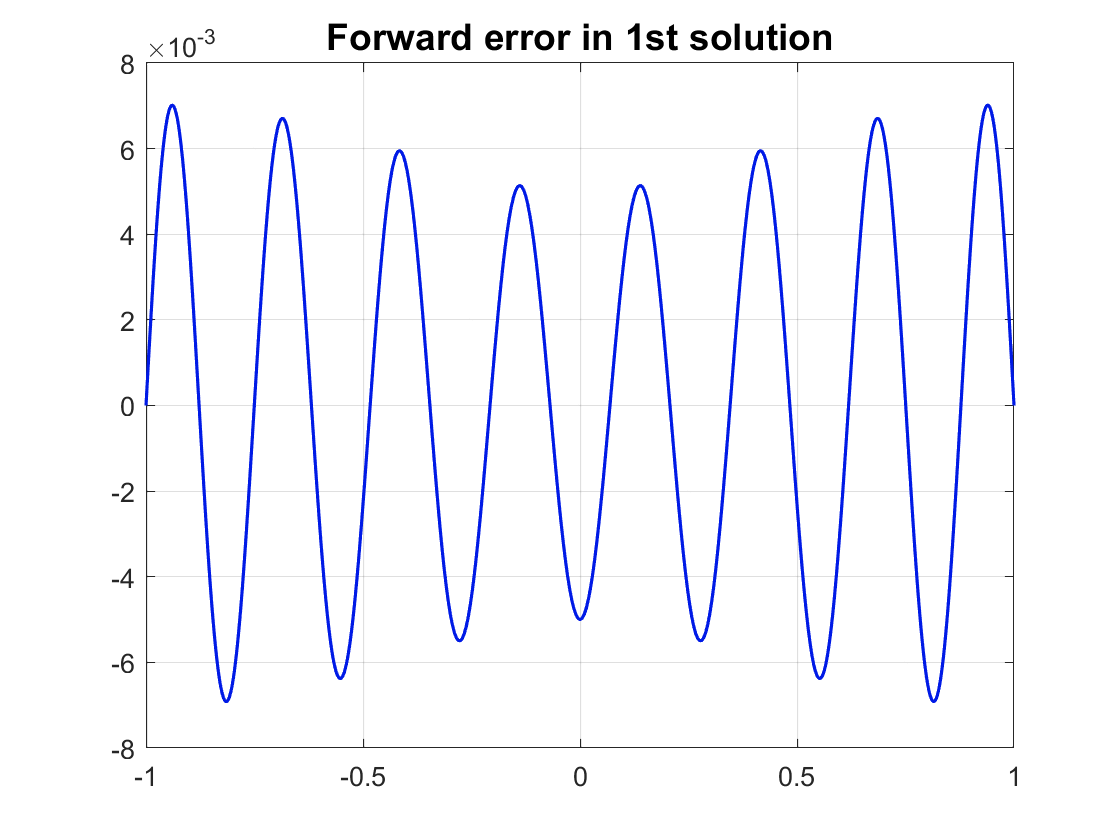}}
\subfigure[Residual after one iteration\label{fig:iteratedChebres}]{\includegraphics[width=.48\textwidth]{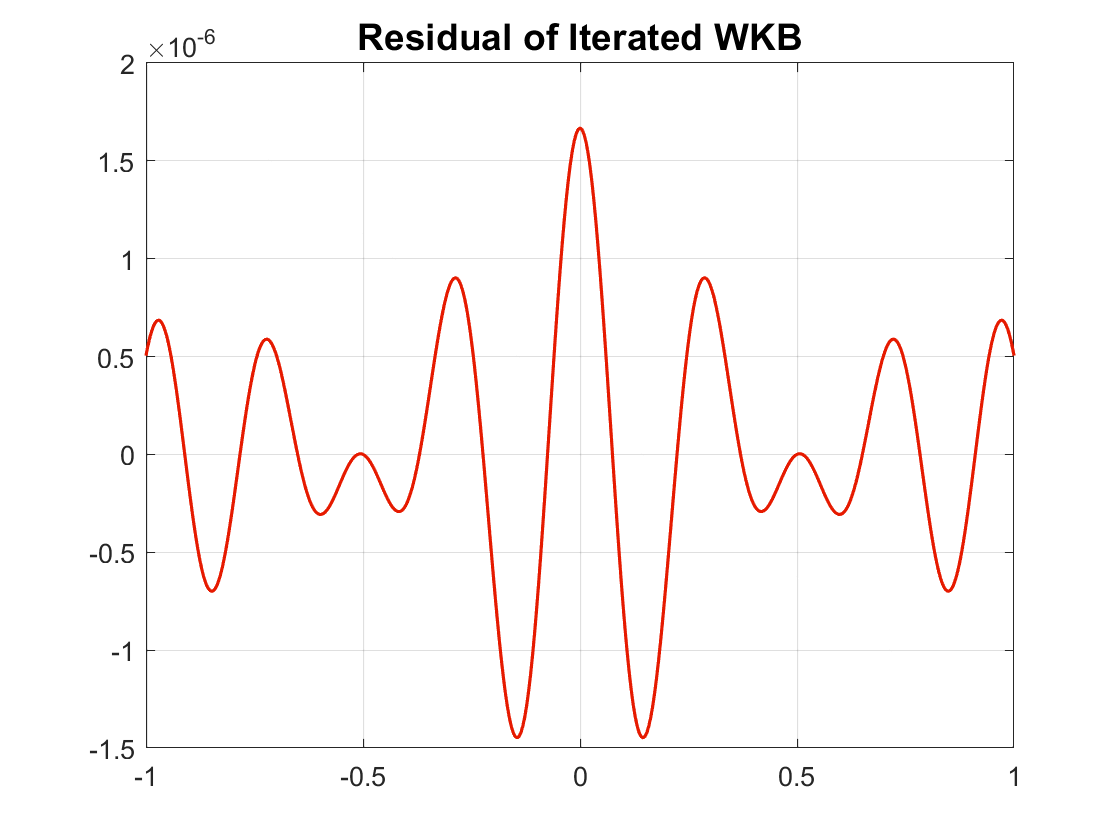}}
\caption[Chebfun and WKB]{(left) The forward difference $y-y_4$ between the WKB solution $y$ by Chebfun to $\e^2y'' + \cosh(x) y = 0$, with $y(-1)=y(1)=1$, for $\e=1/21$ and the iterated WKB solution $y_4$ (obtained with one iteration). This difference estimates the true forward error in the less accurate solution, which has the $O(\e^2)$ residual. We see that the error is plausibly $O(\e)$, although this is difficult to tell, even with runs with different $\e$.  (right) The absolute residual in the iterated solution, which is uniformly small (the vertical scale is from $-\snot{1.5}{-6}$ to $\snot{2}{-6}$) and apparently $O(\e^4)$ as it should be. \label{fig:iteratedchebfunex} }
\end{figure}

\begin{lstlisting}
clf
LW = 'LineWidth'; CO = 'Color'; FS = 'FontSize';
a = -1
b = 1
ya = 1
yb = 2
Q = chebfun( @(x) cosh(x), [a,b] )
f = sqrt(Q); # Q really -Q
fi = cumsum(f)
ep = 1.0/10.0;
y1 = f^(-0.5).*cos( fi/ep )
y2 = f^(-0.5).*sin( fi/ep )
A = [y1(a) y2(a)
     y1(b) y2(b)]
c = A\[ya;yb];
y = c(1)*y1 + c(2)*y2;
figure(1),plot(y,LW,1.2,CO,[0 .1 .9]), grid on
title('WKB Solution',FS,14)
res = ep^2*diff(diff(y)) + Q*y;
figure(2),plot(res,LW,1.2,CO,[0.9 .1 0]), grid on
title('Residual of the WKB Solution',FS,14)
% Now iterate
Qt = Q;
N = 8;
for i=1:N
    # Q really -Q
    Qt = Q + ep^2*( 5*(diff(Qt)/(4*Qt))^2 - (diff(diff(Qt))/(4*Qt)))
end;
f1 = sqrt(Qt); 
f1i = cumsum(f1)
y11 = f1^(-0.5).*cos( f1i/ep )
y12 = f1^(-0.5).*sin( f1i/ep )
A = [y11(a) y12(a)
     y11(b) y12(b)]
c1 = A\[ya;yb];
y1 = c1(1)*y11 + c1(2)*y12;
figure(3),plot(y1,LW,1.2,CO,[0 .1 .9]), grid on
title('Iterated WKB Solution',FS,14)
res1  = ep^2*diff(diff(y1)) + Q*y1;
figure(4),plot(res1,LW,1.2,CO,[0.9 .1 0]), grid on
title('Residual of the iterated WKB Solution',FS,14)
\end{lstlisting}

When we did the ``difficult'' example $Q(x) = -128x^6/27+80x^4/9-32x^2/9+38/27$ Chebfun had no difficulty whatever.  For $\e=1/21$ the residual error was less than $\snot{5}{-4}$, and the iterated WKB had residual error less than $\snot{7}{-7}$.

The code as listed above has \textsl{eight} iterations of the IWKB method, which would be almost impossible with purely symbolic computation. Here, for $\e=1/10$, the backward error of $Q_0$ has maximum magnitude about $\snot{2.5}{-2}$, while the residual after eight iterations has maximum magnitude $\snot{6}{-10}$. This suggests that the derivatives are growing somewhat, so we are not achieving the full $O(\e^{18})$ accuracy, but we still see significant improvement. As we said before, we don't really care whether or not this iteration would converge if we iterated infinitely often.  Rounding errors would eventually accumulate and spoil the iteration anyway.  For this example the best we can do appears to be in the $N=8$ to $N=10$ range, all of which produce residuals about $\snot{5}{-10}$ or so. It's worse with $N=10$ than it is with $N=9$ and gets worse still as $N$ increases from there.  Whether this is divergence (this seems probable) or rounding error we do not know, and we have not investigated the question because knowing the answer would have no impact.

As a final example, we solve $\e^2 y'' = (1+(x-1/4)^4)y=0$ subject to $y(0)=1$ and $y(2) = 1$, making suitable modifications to the script above.  We chose $\e=1/161.5$. The residuals are plausibly $O(\e^2)$ and $O(\e^4)$ as expected, and the estimated forward error is about $\snot{1}{-3}$ which is again plausibly $O(\e)$.  See figure~\ref{fig:WKBCex2g}.

If instead we try to solve $\e^2 y'' = (1+|x-1/4|^{1/2})y$ with the same boundary conditions, the iterated WKB method fails with the very appropriate error message ``Delta functions at the same point cannot be multiplied''. The basic method, however, succeeds in computing an answer although the residual is apparently infinite at $x=1/4$.

\begin{figure}
    \centering
\subfigure[Solution for $\e=1/161.5$  \label{fig:WKBCex2}]{ \includegraphics[width=0.45\textwidth]{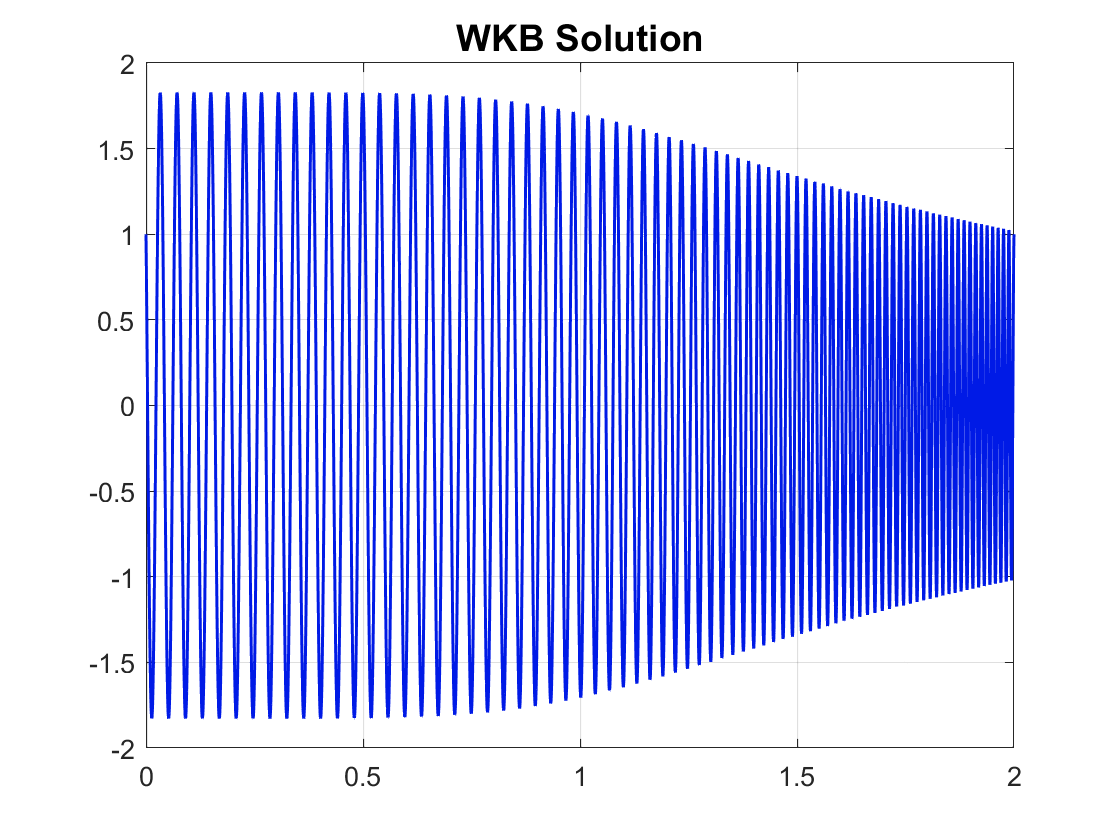}}
\subfigure[Residual for $\e=1/161.5$  \label{fig:WKBCex2res}]{ \includegraphics[width=0.45\textwidth]{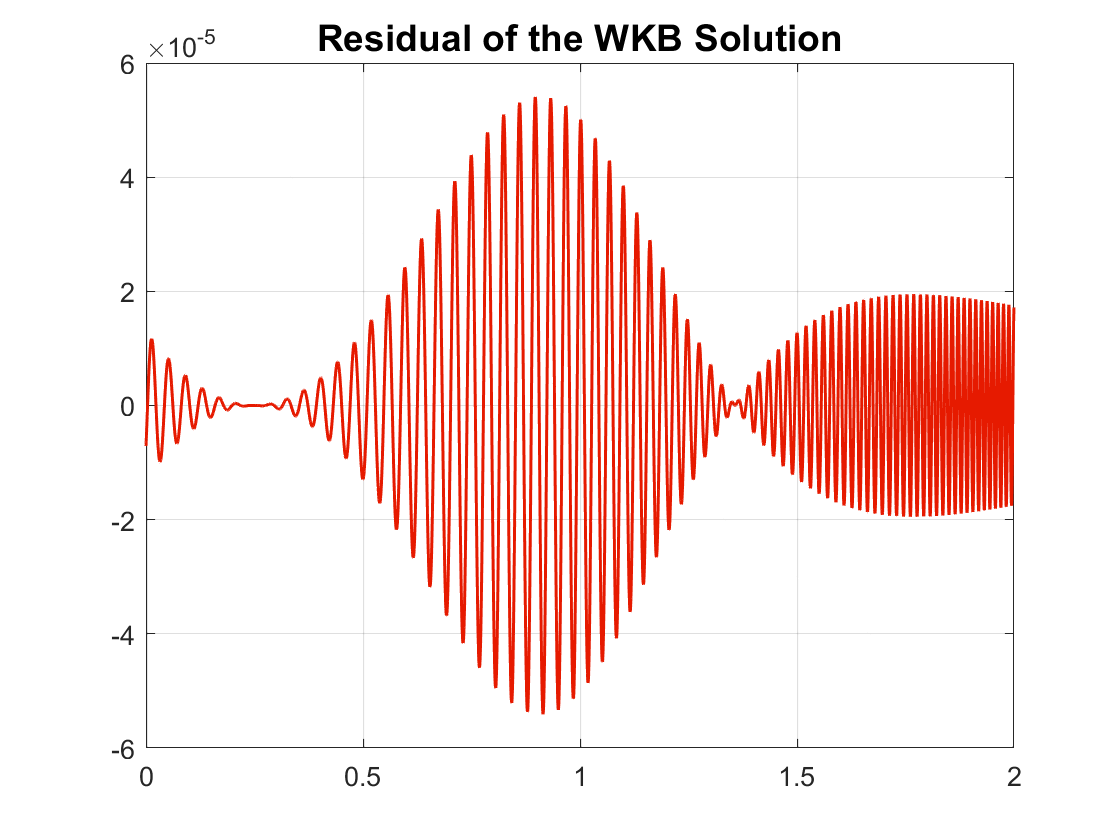}}
\\
\subfigure[Forward error  \label{fig:WKBCex2fe}]{ \includegraphics[width=0.45\textwidth]{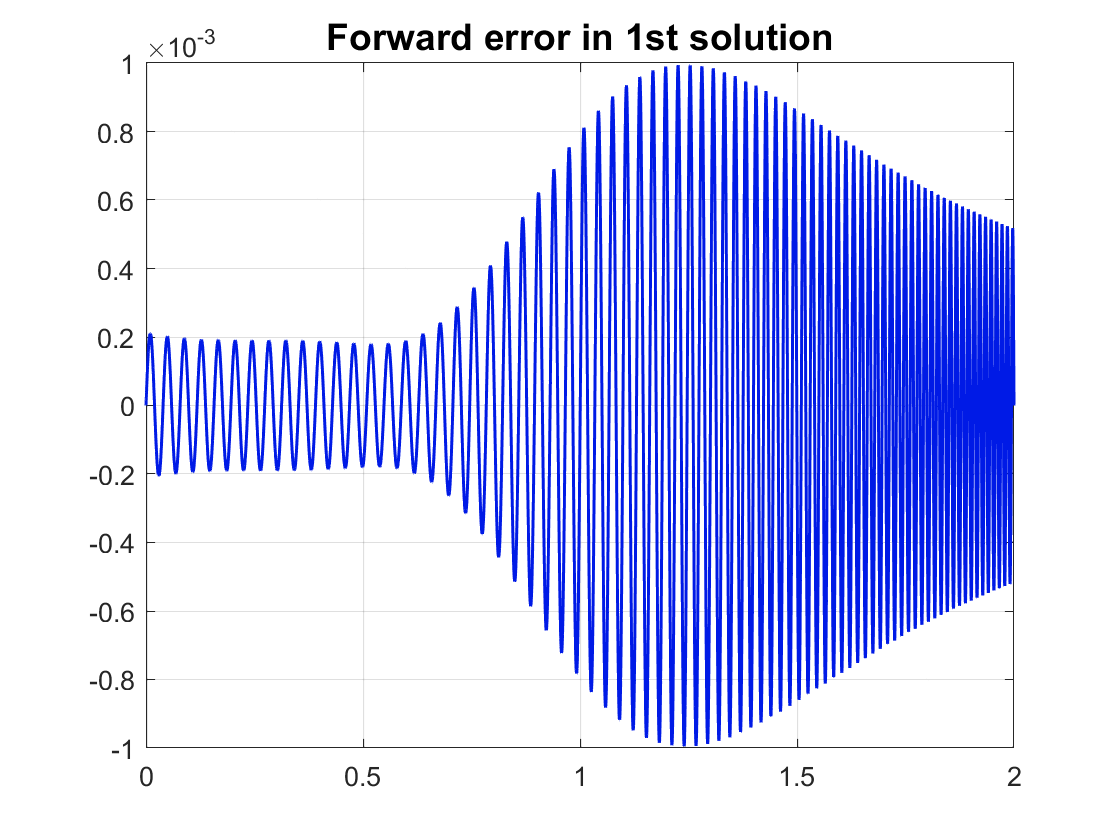}}
\subfigure[Iterated residual  \label{fig:WKBCex2iter}]{ \includegraphics[width=0.45\textwidth]{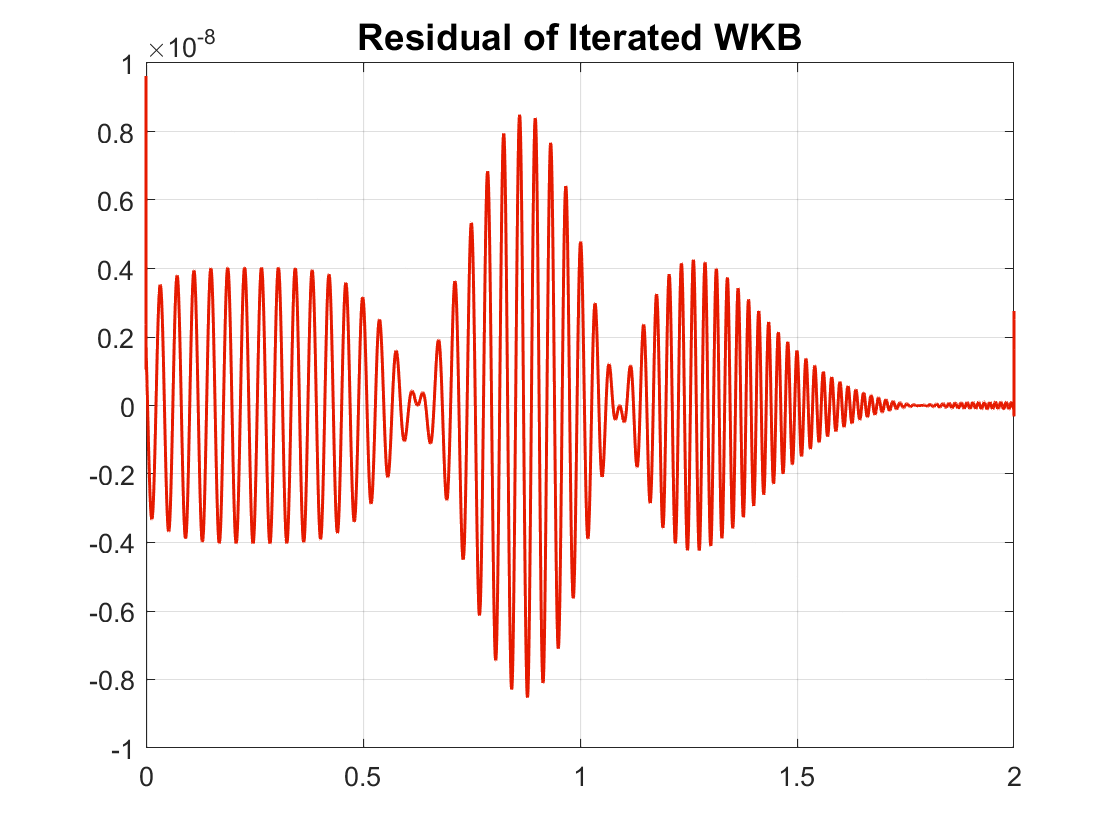}}
\caption{\label{fig:WKBCex2g}
Solution by the hybrid method in Chebfun of $\e^2 y'' +(1+(x-1/4)^4)y = 0$, $y(0)=1$ and $y(2)=1$, for $\e = 1/161.5$. Top left: the basic solution. Top right: the residual in the basic solution. Bottom left: the difference between the basic solution and the once iterated solution, which is an estimate of the forward error. Bottom right: the residual in the once iterated solution.}

\end{figure}

\begin{remark}
    We did not use the \textsl{relative} residual with Chebfun, because the exponential of a chebfun is itself converted to another chebfun, automatically. Similarly the sine of a chebfun and the cosine of a chebfun are replaced by other chebfuns.  Normally, this automatic simplification is a strength of Chebfun, but in this case it spoils the precise location of zeros of oscillatory solutions, and makes the division $y''/y$ problematic. So we use absolute residuals instead.

    If we use the Chebyshev method in Maple instead, then of course it is more complicated to have to handle all the details ourselves, but an advantage appears when using the exponential (or sine or cosine) of a Chebyshev series and not a Chebyshev series of that: the residuals will be computed correctly.
\end{remark}
We may also compare to the Chebop system of solving differential equations~\cite{driscoll2008chebop}.  The problems we solve here with WKB can also be solved using Chebop, which of course is a general-purpose tool.  
Whether we plot the symbolic Green's function, the WKB Green's function, or the chebop Green's function, all are similar. Fast as chebops are, though, the WKB approximation in Chebfun is much faster.  Of course that's because in comparison one can do so much more with chebops than just solve ODE of this particular type.
\section{Concluding remarks}
WKB provides a method for solving Schr\"odinger-type second order linear equations that complements numerical methods.
Accurate numerical solution of some of these boundary-value problems using a general-purpose code such as \lstinline{bvp5c}~\cite{kierzenka2008bvp} or \lstinline{COLNEW}~\cite{bader1987new} or \lstinline{MIRKDC}~\cite{shampine2006user} typically becomes unaffordable as $\e \to 0^+$. In that case, the WKB method provides a useful complement, as is well-known.  The observation of this paper that the WKB method provides exact solutions of nearby problems of the same type seems to us to be a very useful tool in the analysis of such solutions.

The analysis presented here is a \textsl{structured} backward error analysis.  The WKB method gives the \textsl{exact} solution to a problem in the same class with a potential perturbed by $O(\e^2)$. A large structured backward error is a strong indication that the computation cannot be trusted; this happens, for instance, at places where $Q$ is not differentiable. 

The \textsl{unstructured} condition number of the problem is given by the Green's function.  
Residual accuracy increases as $\e \to 0$, whereas numerical methods find the problem increasingly difficult as $\e \to 0$.
Some existing error analyses come remarkably close to a structured backward error analysis, but all of them that we are aware of miss the point, even if just barely. We believe that starting with an infinite series obscures the remarkable fact that the approximation from physical optics gives the exact solution to a problem with a different potential.

The hybrid method using polynomial approximation for $\sqrt{Q}$ and its integral---which we believe is new---would probably only occur naturally to someone who was already thinking about backward error. 
Once one realizes that the WKB process gives the exact result for a perturbed potential, then perturbing the potential a tiny bit more becomes perfectly acceptable.  In the method discussed here, the square root of potential $\sqrt{Q(\xi)}$ is approximated by Chebyshev polynomials, or, in Chebfun, by piecewise Chebyshev polynomials.  This makes integration straightforward, whenever $\sqrt{Q}$ can be well-approximated by a piecewise polynomial.  This is particularly convenient in Chebfun, but it can be done in Maple as well.  


If, however, $\sqrt{Q}$ contains singularities or derivative singularities, then polynomial approximation is suboptimal. In that case greater care must be taken.
Indeed, because the residual in the WKB method contains a term $Q''/(4Q)$ the residual can be unbounded if $Q$ contains singularities, and since the Green's function will be $O(1/\e)$ one expects that even with great care one may not get good results.

The WKB method is already a practical method wherever hand computation of the bottleneck integral of $\sqrt{Q}$ can be carried out.  Computer algebra systems such as Maple make that bottleneck less of a problem, by virtue of the strength of their integration capabilities.  Nonetheless even that strength can sometimes be unsatisfactory when the complicated expressions that are returned for that bottleneck integral require special handling.  In such cases, the hybrid method removes that obstacle and gives access to the good asymptotic approximation of the WKB method while retaining the simplicity of polynomial approximation using (for example) Chebyshev polynomials.  The hybrid method is particularly suitable for use in the Chebfun system, but works quite well in Maple.  
Using the WKB method iteratively with Algorithm~\ref{alg:IterativeWKB} (which we also believe is new) is also possible with the hybrid Chebyshev integration; indeed, the bottleneck in that iteration is removed completely by using Chebyshev approximation.    

Finally, we note in passing that the Langer formula, which gives a uniform approximation to the solution of problems with simple turning points, also has a simple relative residual formula, namely $\e^2 R(x)$ where
\begin{equation}
      R(x) = \frac{5 \left(\frac{d}{d x}Q \! \left(x \right)\right)^{2}}{16 Q \! \left(x \right)^{2}}-\frac{\frac{d^{2}}{d x^{2}}Q \! \left(x \right)}{4 Q \! \left(x \right)}-\frac{5 Q \! \left(x \right)}{36 \left({\textcolor{black}{\int}}_{\!\!\!0}^{x}\sqrt{Q \! \left(\xi \right)}\textcolor{black}{d}\xi \right)^{2}}\>.
\end{equation}
[A proof is given in~\cite{CorlessFillion2025}.] This suggests that we may iterate to improve the WKB approximation near turning points, as well. Preliminary experiments with this are encouraging.

\providecommand{\noopsort}[1]{}


\end{document}